\tikzset{
	element/.style={circle,fill=black,scale=0.6,label distance=0.15}
}
\newtheorem{theorem}{Theorem}[section]
\newtheorem{lemma}[theorem]{Lemma}
\newtheorem{corollary}[theorem]{Corollary}
\newtheorem{prop}[theorem]{Proposition}
\numberwithin{equation}{section} 
\newcommand{\e}{\emph}
\newcommand{\cl}{{\rm cl}}
\newcommand{\delete}{\backslash}
\newdimen\p@renwd \setbox0=\hbox{\phantom B} \p@renwd=\wd0
\def\bordersquare#1{\begingroup \m@th
	\setbox0=\vbox{\def\cr{\crcr\noalign{\kern2pt\global\let\cr=\endline}}
		\ialign{$##$\hfil\kern2pt\kern\p@renwd&\thinspace\hfil$##$\hfil
			&&\quad\hfil$##$\hfil\crcr
			\omit\strut\hfil\crcr\noalign{\kern-\baselineskip}
			#1\crcr\omit\strut\cr}}
	\setbox2=\vbox{\unvcopy0 \global\setbox1=\lastbox}
	\setbox2=\hbox{\unhbox1 \unskip \global\setbox1=\lastbox}
	\setbox2=\hbox{$\kern\wd1\kern-\p@renwd \left[ \kern-\wd1
		\global\setbox1=\vbox{\box1\kern2pt}
		\vcenter{\kern-\ht1 \unvbox0 \kern-\baselineskip} \,\right]$}
	\null\;\vbox{\kern\ht1\box2}\endgroup}
\newcommand{\cW}{\mathcal{W}}
\newcommand{\kl}{$(k,\ell)$}
\title{A generalisation of uniform matroids}
\author[G.Drummond]{George Drummond}
\address{School of Mathematics and Statistics \\
    University of Canterbury \\
    Christchurch, New Zealand}
\email{george.drummond@pg.canterbury.ac.nz}
\subjclass{05B35}
\keywords{matroids, uniform, paving, flats, binary}
\begin{document}
\thispagestyle{empty}

\begin{abstract}
	A matroid is uniform if and only if it has no minor \linebreak isomorphic to $U_{1,1}\oplus U_{0,1}$ and is paving if and only if it has no \linebreak minor isomorphic to $U_{2,2}\oplus U_{0,1}$. This paper considers, more generally, when a matroid $M$ has no $U_{k,k}\oplus U_{0,\ell}$-minor for a fixed pair of positive integers $(k,\ell)$. Calling such a matroid \emph{$(k,\ell)$-uniform}, it is shown that this is equivalent to the condition that every rank-$(r(M)-k)$ flat of $M$ has nullity less than $\ell$. Generalising a result of Rajpal, we prove that for any pair $(k,\ell)$ of positive integers and prime power $q$, only finitely many simple cosimple $GF(q)$-representable matroids are \kl-uniform. Consequently, if Rota's Conjecture holds, then for every prime power $q$, there exists a pair $(k_q,\ell_q)$ of positive integers such that every excluded minor of $GF(q)$-representability is $(k_q,\ell_q)$-uniform. We also determine all binary $(2,2)$-uniform matroids and show the maximally $3$-connected members to be $Z_5\delete t, AG(4,2), AG(4,2)^*$ and a particular self-dual matroid $P_{10}$. Combined with results of Acketa and Rajpal, this completes the list of binary $(k,\ell)$-uniform matroids for which $k+\ell\leq 4$.
\end{abstract}

\maketitle

\section{Introduction}\label{sec: intro}
A matroid $M$ is \emph{paving} if every rank-$(r(M)-2)$ flat is independent, or equivalently, if $M|H$ is uniform for every hyperplane $H$ of $M$. Thus, in a natural sense, paving matroids are close to uniform. In this paper, we generalise this observation and describe  a two-parameter property of matroids that captures just how close to uniform a given matroid is.

For positive integers $k$ and $\ell$, we define a matroid to be \e{$(k,\ell)$-uniform} if it has no minor isomorphic to $U_{k,k}\oplus U_{0,\ell}$. It is easy to show that a matroid is $(1,1)$-uniform precisely if it is uniform and is $(2,1)$-uniform precisely if it is paving. It is also evident that all matroids are \kl-uniform for some $(k,\ell)$ pair and that if $M$ is \kl-uniform, then it is $(k',\ell')$-uniform for all $k'\geq k$ and $\ell'\geq \ell$. Furthermore, an easy duality argument shows that a matroid is \kl-uniform if and only if its dual is $(\ell,k)$-uniform. We will use all these facts freely. The first main result of this paper, proved in Section~\ref{sec: finiteness}, concerns representability of \kl-uniform matroids.
\begin{theorem}\label{thm: finiteness}
	For every pair $(k,\ell)$ of positive integers and every prime power $q$, only finitely many simple cosimple $GF(q)$-representable matroids are \kl-uniform.
\end{theorem}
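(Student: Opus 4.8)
The plan is to bound the rank of such a matroid by a function of $k$, $\ell$ and $q$; this suffices, because if $M$ is simple and $GF(q)$-representable of rank $r$ then $M$ embeds as a restriction of $PG(r-1,q)$, so $|E(M)|\le (q^{r}-1)/(q-1)$, and a bound on $r$ immediately bounds $|E(M)|$ and hence the number of isomorphism classes. Throughout I would use the characterisation promised in the introduction: $M$ is \kl-uniform exactly when every rank-$(r(M)-k)$ flat has nullity less than $\ell$. The first thing I would record is the resulting recursion that $M$ is \kl-uniform if and only if $M|H$ is $(k-1,\ell)$-uniform for every hyperplane $H$ of $M$: every flat of rank $r(M)-k$ lies in some hyperplane, and its nullity is unchanged on restricting to that hyperplane. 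This recursion is the engine for an induction on $k$.

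For the base case $k=1$ I would argue through the dual. Since $M$ is cosimple, $M^{*}$ is simple, and being $(\ell,1)$-uniform it has every flat of rank $r(M^{*})-\ell$ independent; equivalently $M^{*}$ has girth at least $r(M^{*})-\ell+2$. Contracting an independent set of size $r(M^{*})-2$ then produces a rank-$2$ minor, that is, a line, which is $GF(q)$-representable and so has at most $q+1$ parallel classes. The high girth forces these classes to be small: in the cleanest instance $\ell=1$ the matroid $M^{*}$ is uniform, the line is $U_{2,\,|E|-r(M^{*})+2}$, and representability gives $|E|-r(M^{*})+2\le q+1$, bounding the corank of $M^{*}$ and hence $r(M)$. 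For larger $\ell$ the same contraction, together with the bound $(q^{d}-1)/(q-1)$ on the number of points in a rank-$d$ flat, limits both the number and the size of the parallel classes and again bounds $r(M)$; this is precisely where the theorem of Rajpal on representable paving matroids sits (the case $\ell=2$), and where I would expect to do the most careful counting.

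For the inductive step I would take $M$ to be \kl-uniform, simple, cosimple and $GF(q)$-representable, and apply the recursion: every hyperplane restriction $M|H$ is $(k-1,\ell)$-uniform, simple and $GF(q)$-representable. If the inductive bound $N(k-1,\ell,q)$ applied to $M|H$, it would give $|H|\le N(k-1,\ell,q)$ for every hyperplane, and since a hyperplane of a rank-$r$ matroid has at least $r-1$ elements this would yield $r\le N(k-1,\ell,q)+1$, completing the induction. The main obstacle is that $M|H$ need not be cosimple: deletion preserves simplicity but can create coloops and series pairs, while contraction preserves cosimplicity but destroys simplicity, so no single minor operation keeps both hypotheses. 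I would resolve this by passing to the cosimplification of $M|H$ and controlling how many coloops and nontrivial series classes can appear in a hyperplane of a \kl-uniform matroid, and by exploiting duality (here $M$ is \kl-uniform if and only if $M^{*}$ is $(\ell,k)$-uniform, and $M$ is simple and cosimple if and only if $M^{*}$ is) to trade a bound on the rank for a bound on the corank wherever the two hypotheses come apart. Reconciling simplicity with cosimplicity across the induction is the crux; once that is handled, the rank bound, and hence the finiteness, follows.
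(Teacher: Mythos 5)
Your overall skeleton agrees with the paper's: bound the rank by a function $f(k,\ell,q)$, note that simplicity plus $GF(q)$-representability then bounds the size, and induct on $k$ with the base case $k=1$ handled through the dual and Rajpal's theorem (the paper quotes exactly this as Lemma~\ref{lem: Rajpal N(k,q)}, and your $\ell=1$ computation matches its exclusion of $U_{q,q+2}$). One caution on your base case for $\ell\ge 2$: the contract-to-a-line counting does not by itself bound the corank of $M^*$. The loops of $M^*/I$ are the elements of $\cl(I)\setminus I$, and the nullity of the rank-$(r^*-2)$ flat $\cl(I)$ is not controlled by the $(\ell,1)$-condition once $\ell\ge 3$; nor are the sizes of the $q+1$ parallel classes bounded by girth alone. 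The real content there is Rajpal's coding-theoretic bound, which must be invoked as a black box (as the paper does), not rederived by this counting.

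The genuine gap is in your inductive step, and you name it yourself: the hyperplane recursion ($M$ is \kl-uniform iff every $M|H$ is $(k-1,\ell)$-uniform) is correct, but $M|H$ need not be cosimple, and your proposed remedies --- cosimplify, ``control'' coloops and series classes in hyperplanes, trade rank for corank by duality --- are a plan rather than an argument. Cosimplification of a simple matroid can destroy simplicity, and there is no evident bound on the number or size of series classes in hyperplane restrictions of a \kl-uniform matroid, so the induction as proposed does not close. The paper (Proposition~\ref{prop: f(k,l,q)}) avoids hyperplanes entirely with a different dichotomy: either $M$ is also $(k-1,\ell+1)$-uniform --- note that the induction \emph{increments} $\ell$ as it decrements $k$, an idea absent from your proposal --- in which case induction applies to $M$ itself, still simple and cosimple; or $M$ has a flat $F$ of rank $r(M)-k+1$ with nullity at least $\ell+1$. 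Then $M|F$ is $(1,\ell)$-uniform, and the decisive observation is that the large nullity forces cosimplicity for free: $(M|F)^*$ is $(\ell,1)$-uniform of rank $r^*(M|F)\geq \ell+1$, hence simple by Lemma~\ref{lem: simple}. Thus $M|F$ is a simple cosimple $(1,\ell)$-uniform $GF(q)$-representable matroid, the base case bounds $r(F)\leq f(1,\ell,q)$, and $r(M)\leq f(1,\ell,q)+(k-1)$. This jump straight to the base case on a single high-nullity flat is precisely the mechanism your proposal lacks; without it, or a genuine substitute for the deferred series-class control, the proof is incomplete.
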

Note that both the simple and cosimple requirements in this theorem are necessary, as the uniform matroids $U_{1,n}$ and $U_{n-1,n}$ are representable over every field for all $n\geq 1$. The following is an interesting corollary of this theorem:

\begin{corollary}
	For every prime power $q$, the set of excluded minors for $GF(q)$-representability is finite if and only if for some fixed pair $(k_q,\ell_q)$ of positive integers, every such excluded minor is $(k_q,\ell_q)$-uniform.
\end{corollary}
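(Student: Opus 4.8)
The forward implication is immediate. If there are only finitely many excluded minors for $GF(q)$-representability, then, since every matroid is $(k,\ell)$-uniform for some pair, each excluded minor $N$ is $(k_N,\ell_N)$-uniform for some pair; setting $k_q=\max_N k_N$ and $\ell_q=\max_N\ell_N$ over this finite set and using the monotonicity of $(k,\ell)$-uniformity makes every excluded minor $(k_q,\ell_q)$-uniform.

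For the converse, assume every excluded minor is $(k_q,\ell_q)$-uniform and let $M$ be one; the aim is to bound $|E(M)|$ by a constant depending only on $k_q,\ell_q,q$. Since $GF(q)$-representability is closed under minors and duality, $M$ is both simple and cosimple and all of its \emph{proper} minors are representable. In particular $M\delete e$ is simple and representable, so $|E(M)|\le 1+(q^{r(M)}-1)/(q-1)$; hence a bound on $r(M)$ yields a bound on $|E(M)|$, and, applying this to the $(\ell_q,k_q)$-uniform excluded minor $M^*$, a bound on $r^*(M)=|E(M)|-r(M)$ does too. This also disposes of the boundary cases $r(M)\le k_q+1$ and $r^*(M)\le\ell_q+1$ directly.

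The heart of the proof is a flat-counting argument bounding $r^*(M)$ in the main case $r=r(M)\ge k_q+2$. Pick a flat $F$ of $M$ of rank $r-k_q$; the flat characterisation of $(k_q,\ell_q)$-uniformity gives $\mathrm{null}(F)<\ell_q$, so $|F|\le r-k_q+\ell_q-1$. Every $x\in E(M)\delete F$ lies in a unique covering flat $F_1=\cl(F\cup\{x\})$ of rank $r-k_q+1$, and these covering flats are precisely the points of the rank-$k_q$ representable minor $M/F$; hence there are at most $(q^{k_q}-1)/(q-1)$ of them. To bound a single $|F_1\delete F|$, fix a flat $G\subset F$ of rank $r-k_q-1$ and pass to $M/G$, a representable minor of rank $k_q+1$: here $F_1/G$ is a line of which $F/G$ is one point, a representable line has at most $q+1$ points, and applying the flat characterisation to the $(k_q,\ell_q)$-uniform matroid $M/G$ bounds each of its rank-$1$ flats by $\ell_q$ elements. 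Thus $|F_1\delete F|\le q\ell_q$, and summing over the covering flats gives $|E(M)\delete F|\le q\ell_q(q^{k_q}-1)/(q-1)$. Therefore $|E(M)|\le r+C_1$ for a constant $C_1=C_1(k_q,\ell_q,q)$, i.e. $r^*(M)\le C_1$.

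Running the same argument on the $(\ell_q,k_q)$-uniform excluded minor $M^*$ bounds $r(M)\le C_2$ in the case $r^*(M)\ge\ell_q+2$. Combining with the boundary cases above, $|E(M)|=r(M)+r^*(M)$ is bounded by a constant depending only on $k_q,\ell_q,q$, so there are only finitely many excluded minors. The main obstacle is precisely that Theorem~\ref{thm: finiteness} cannot be applied to $M$ directly, as $M$ is not representable, and one cannot in general extract from $M$ a high-rank simple cosimple representable $(k_q,\ell_q)$-uniform minor (if $M$ is nearly free it has no such minor); the way around this is to bound the rank and corank separately by the counting above, which uses only the representability of the proper minors $M/F$ and $M/G$ together with the structural consequences of $(k_q,\ell_q)$-uniformity underlying Theorem~\ref{thm: finiteness}.
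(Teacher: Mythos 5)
Your proof is correct, and it takes a genuinely different route from the paper's. The paper states this corollary without an explicit proof, presenting it as an immediate consequence of Theorem~\ref{thm: finiteness}; the natural derivation it has in mind uses the standard facts that excluded minors for $GF(q)$-representability are $3$-connected and that, by Tutte's wheels-and-whirls theorem, any such excluded minor $M$ (wheels being regular and whirls representable over all fields with at least three elements, the only wheel-or-whirl excluded minor is $U_{2,4}=\cW^2$ for $q=2$) has a single-element deletion or contraction $N$ that is $3$-connected, hence simple and cosimple, as well as $GF(q)$-representable and $(k_q,\ell_q)$-uniform; Theorem~\ref{thm: finiteness} then leaves finitely many possibilities for $N$, and $M$ is one of the finitely many single-element extensions or coextensions of these. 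You bypass Theorem~\ref{thm: finiteness} altogether and instead bound $|E(M)|$ directly, using only that excluded minors are simple and cosimple, that their proper minors are representable, and the flat characterisation of Proposition~\ref{prop: nullity def} applied to the proper minors $M/F$ and $M/G$, combined with point counts in projective space; your covering-flat count is sound (the covering flats of $F$ partition $E(M)\delete F$, $M/G$ is loopless since $G$ is a flat, and each of its rank-$1$ flats has at most $\ell_q$ elements), giving the explicit bound $r^*(M)\le \ell_q-1-k_q+q\ell_q(q^{k_q}-1)/(q-1)$ in the main case. What the paper's route buys is brevity and reuse of its main theorem; what yours buys is a self-contained elementary argument with explicit bounds that avoids both Theorem~\ref{thm: finiteness} and any connectivity machinery. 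One small caveat: your closing remark that one ``cannot in general extract from $M$ a high-rank simple cosimple representable $(k_q,\ell_q)$-uniform minor'' is overstated for excluded minors specifically, since $3$-connectivity together with wheels-and-whirls does supply exactly such a minor of rank at least $r(M)-1$; this is the gap-filler the paper's implicit argument relies on, though your direct counting renders it unnecessary.
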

To illustrate, for $q\leq 4$, every excluded minor of $GF(q)$-representability is $(2,1)$-uniform, that is, paving. As Geelen, Gerards and Whittle have announced a proof of Rota's Conjecture~\cite{Geelen et al}, it would seem that such $(k_q,\ell_q)$ pairs exist for all $q$. If well behaved, these bounds may offer improved methods for explicitly determining the excluded minors of $GF(q)$-representability.

By applying duality to the lists of binary $(2,1)$-uniform and $(3,1)$-uniform matroids of Acketa~\cite{Acketa} and Rajpal~\cite{Rajpal k-paving} respectively, one may explicitly list all binary $(1,2)$-uniform and $(1,3)$-uniform matroids. These results concern binary \kl-uniform matroids such that $k+\ell\leq 4$. We complete this picture in Sections~\ref{sec: not 3conn} and~\ref{sec: 3conn} by determining the binary $(2,2)$-uniform matroids. The most difficult part of the characterisation is in establishing the following result, the proof of which appears in Section~\ref{sec: 3conn}: 
\begin{theorem}\label{thm: 3-connected binary (1,2)}
	The $3$-connected binary $(2,2)$-uniform matroids are precisely the $3$-connected minors of $Z_5\delete t, P_{10},AG(4,2)$, and $AG(4,2)^*$.
\end{theorem}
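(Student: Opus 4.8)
The plan is to leverage the finiteness from Theorem~\ref{thm: finiteness} to reduce the statement to locating the maximal members of a finite class, and then to locate them by a geometric analysis in $PG(4,2)$ assisted by duality. I would first record that the class of binary $(2,2)$-uniform matroids is minor-closed (the defining condition forbids a fixed minor, and minors of binary matroids are binary) and closed under duality (since $(k,\ell)$-uniformity dualises to $(\ell,k)$-uniformity, and here $k=\ell=2$). As every $3$-connected matroid on at least four elements is simple and cosimple, Theorem~\ref{thm: finiteness} shows that there are only finitely many $3$-connected binary $(2,2)$-uniform matroids, and any $3$-connected minor of a member of the class is again a member. Hence the inclusion asserting that every $3$-connected minor of $Z_5\delete t$, $P_{10}$, $AG(4,2)$ or $AG(4,2)^*$ belongs to the class is immediate once one checks that these four matroids are themselves $3$-connected, binary and $(2,2)$-uniform; for the last property I would use the flat reformulation stated in the introduction, namely that a rank-$r$ matroid is $(2,2)$-uniform exactly when every rank-$(r-2)$ flat has nullity at most $1$. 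For $AG(4,2)$ this is transparent, since its rank-$3$ flats are copies of $AG(2,2)$, each a four-point set of nullity $1$; the case of $AG(4,2)^*$ then follows by duality, and $P_{10}$ and $Z_5\delete t$ are handled by a direct inspection of their rank-$3$ flats.

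For the reverse inclusion I must show that every $3$-connected binary $(2,2)$-uniform matroid $M$ is a minor of one of the four; equivalently, that the four are the only maximal elements of the finite poset above. Since duality is an automorphism of the minor order that preserves the class, this set of maximal elements is closed under duality, consistent with $AG(4,2)$ and $AG(4,2)^*$ forming a dual pair and with $P_{10}$ and $Z_5\delete t$ being self-dual. The key reduction is the rank bound $\min\{r(M),r^*(M)\}\le 5$. Granting it, duality lets me assume $r(M)\le 5$, so $M$ is a simple restriction of $PG(4,2)$ in which every rank-$3$ flat carries at most four points. I would then enumerate all such $3$-connected restrictions and show that the maximal ones are precisely $AG(4,2)$ (the affine part of $PG(4,2)$), $Z_5\delete t$ and $P_{10}$. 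This disposes of every matroid of rank at most $5$; a matroid of rank at least $6$ has corank at most $5$ by the bound, so its dual is a minor of one of these three, whence $M$ itself is a minor of $AG(4,2)^*$, $Z_5\delete t$ or $P_{10}$, using the self-duality of the latter two.

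To prove the rank bound I would argue by contradiction, supposing $r(M)\ge 6$ and $r^*(M)\ge 6$, and count points against the rank-$(r-2)$ flats: the requirement that each such flat of $M$ meets at most $r-1$ points, applied inside a hyperplane and combined with $3$-connectivity, should force a rank-$(r-2)$ flat of nullity at least $2$, that is, a $U_{2,2}\oplus U_{0,2}$-minor. The rank-$5$ enumeration I would organise with Seymour's Splitter Theorem: verifying that none of $AG(4,2)$, $Z_5\delete t$ and $P_{10}$ admits a $3$-connected single-element extension or coextension remaining in the class establishes their maximality, while a build-up argument confirms that every smaller $3$-connected member sits below one of them.

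The step I expect to be the main obstacle is this rank-$5$ analysis together with the extension bookkeeping: deciding exactly which simple binary matroids of rank at most $5$ meet every rank-$3$ flat in at most four points, maintaining $3$-connectivity throughout, and ruling out any maximal member other than the three rank-$5$ matroids will absorb most of the casework. Making the rank bound tight is a secondary difficulty, since the counting must exclude rank $6$ while still permitting $AG(4,2)$, whose corank is $11$.
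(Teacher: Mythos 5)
Your overall architecture matches the paper's --- closure under minors and duality, the bound $\min\{r(M),r^*(M)\}\leq 5$, a rank-$5$ analysis, then dualising --- and your verification of the easy direction is fine. But the two load-bearing steps are sketches rather than proofs, and one would fail as written. First, the rank bound: your plan to derive it by counting points on rank-$(r-2)$ flats inside a hyperplane is unsubstantiated (``should force'' a flat of nullity $2$ is not an argument), and there is no indication such an elementary count closes. The paper obtains this bound (Lemma~\ref{lem: r,r* bound}) not by counting but from nontrivial classification inputs, namely Rajpal's $f(3,1,2)=5$ and Acketa's $f(1,2,2)=4$: either $M$ is $(1,3)$-uniform, whence $r^*(M)\leq 5$ by duality with Rajpal's theorem, or $M$ has a hyperplane $H$ of nullity at least $3$, and $M|H$ is then a simple \emph{cosimple} $(1,2)$-uniform binary matroid (via Lemma~\ref{lem: simple}), forcing $r(M)\leq 4+1=5$. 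Since that lemma is stated and proved before the theorem, you could simply cite it; the replacement argument you propose instead is a genuine gap.

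Second, and more seriously, your rank-$5$ classification contains a logical slip: verifying that $AG(4,2)$, $Z_5\delete t$ and $P_{10}$ admit no $3$-connected single-element extensions or coextensions in the class proves only that these three are \emph{maximal}; it does not exclude \emph{other} maximal members, which is exactly what completeness demands. The ``build-up argument'' you defer is the entire content of the theorem, and a raw Splitter-driven enumeration of the $3$-connected restrictions of $PG(4,2)$ with every rank-$3$ flat carrying at most four points is vast casework with no organizing principle supplied. The paper's proof supplies that principle: for a minor-maximal $M$ with $r(M)=5$, either $M$ has no $M(\cW_4)$-minor, in which case Oxley's characterisation (Lemma~\ref{lem: no mW4 minor}) and the spike computation (Lemma~\ref{lemma: spikes}) give $M\cong Z_5\delete t$; or $M$ extends a single-element coextension $N$ of $M(\cW_4)$, and self-duality of $M(\cW_4)$ pins $N^*$ to one of the three simple binary extensions $M(K_5\delete e)$, $P_9$, $M^*(K_{3,3})$, after which the targeted Lemmas~\ref{lem: MK33 exts} and~\ref{lem: L10} (together with the observation that $L_{10}$ is affine, so $L_{10}^*$ sits inside $AG(4,2)^*$) leave only $AG(4,2)$ and $P_{10}$. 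Without some structural lemma of this kind replacing your enumeration, your proposal does not yet constitute a proof.
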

Here, $Z_5\delete t$ is the tipless binary $5$-spike, $AG(4,2)$ is the rank-$5$ affine \linebreak geometry, and $P_{10}$ is the rank-$5$ binary matroid represented by the matrix of Figure~\ref{fig: P10}. It is easily seen that $P_{10}$ is self-dual and that $P_{10}/5\delete 10\cong M(\cW_4)$. Moreover, by pivoting, one can show that $P_{10}/8\cong Z_4$. A further description of $P_{10}$ is given in Section~\ref{sec: 3conn}. The terminology throughout will follow Oxley~\cite{Oxley2011} unless otherwise specified. 

\begin{figure}[ht]
	\[
	\bordersquare{&1&2&3&4&5&6&7&8&9&10\cr
		&1&0&0&0&0&1&0&0&1&1\cr
		&0&1&0&0&0&1&1&0&0&1\cr
		&0&0&1&0&0&0&1&1&0&1\cr
		&0&0&0&1&0&0&0&1&1&0\cr
		&0&0&0&0&1&1&1&1&0&0\cr
	}
	\]
	\caption{A binary representation of $P_{10}$.}
	\label{fig: P10}
\end{figure}

The \emph{nullity} of a set $X$ in a matroid $M$ is $|X|-r_M(X)$. We conclude this section by proving a characterisation of \kl-uniform matroids in terms of nullity of flats that will be treated as an alternate definition.

\begin{prop}\label{prop: nullity def}
	A matroid $M$ is \kl-uniform if and only if every rank-$(r(M)-k)$ flat of $M$ has nullity less than $\ell$.
\end{prop}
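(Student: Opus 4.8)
The plan is to establish the contrapositive in both directions: $M$ has a $U_{k,k}\oplus U_{0,\ell}$-minor if and only if $M$ has a rank-$(r(M)-k)$ flat of nullity at least $\ell$. When $r(M)<k$ there are no rank-$(r(M)-k)$ flats and no such minor, since every minor has rank at most $r(M)<k$ while $U_{k,k}\oplus U_{0,\ell}$ has rank $k$; so the statement holds vacuously and I assume $r(M)\geq k$ throughout.

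First I would treat the direction from a flat to a minor. Suppose $F$ is a flat with $r_M(F)=r(M)-k$ and $|F|-r_M(F)\geq\ell$, and let $B_F$ be a basis of $F$. Contracting $B_F$ turns every element of $F\delete B_F$ into a loop of $M/B_F$, giving at least $\ell$ loops, and leaves $r(M/B_F)=k$. Since every element of $F\delete B_F$ is a loop of $M/B_F$, no such element lies in any basis, so any basis $B'$ of $M/B_F$ satisfies $B'\sse E(M)\delete F$ and $|B'|=k$. Selecting any $\ell$-element subset $L$ of $F\delete B_F$ and deleting all remaining elements produces a minor $N$ on $B'\cup L$ in which $B'$ is a basis and $L$ is a set of loops. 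As $L$ consists of loops, the rank function is additive across the partition $\{B',L\}$, so $N\cong U_{k,k}\oplus U_{0,\ell}$.

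For the converse, suppose $N=M/C\delete D\cong U_{k,k}\oplus U_{0,\ell}$, and let $L$ denote its $\ell$-element set of loops. By the standard normal form for minors I may assume $C$ is independent in $M$, absorbing any dependent part into $D$. Then $L\sse\cl_M(C)$ and $r_M(C)\leq r(M)-k$. The crux is to enlarge $C$ to a flat of rank exactly $r(M)-k$ without disturbing $L$. Applying submodularity in $M/C$ to the partition $\{D,\,E(M)\delete(C\cup D)\}$ gives $r_{M/C}(D)\geq r(M/C)-r(N)=r(M)-r_M(C)-k$, so $D$ contains a set $J$, independent in $M/C$, with $r_{M/C}(J)=r(M)-k-r_M(C)$. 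Setting $C'=C\cup J$, the set $C'$ is independent in $M$ with $r_M(C')=r(M)-k$, and $L$ remains disjoint from $C'$ (as $J\sse D$ while $L\sse E(N)$) and inside $\cl_M(C')$. Hence $F'=\cl_M(C')$ is a flat of rank $r(M)-k$ containing the disjoint sets $C'$ and $L$, of sizes $r(M)-k$ and $\ell$, so its nullity is at least $\ell$.

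I expect the augmentation in the converse to be the main obstacle. A $U_{k,k}\oplus U_{0,\ell}$-minor only supplies a contraction set $C$ with $r_M(C)\leq r(M)-k$, while the flat to be exhibited must have rank exactly $r(M)-k$; the extra independent elements needed to raise the rank must be drawn from the deleted set $D$ rather than from $E(N)$, since the latter would interfere with $L$. The submodularity bound $r_{M/C}(D)\geq r(M)-r_M(C)-k$ is precisely what guarantees $D$ is rich enough to supply them.
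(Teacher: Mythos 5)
Your proof is correct and takes essentially the same route as the paper: in both directions the desired flat is the closure $\cl_M$ of an independent contraction set (into which the $\ell$ loops of the minor must fall), and the minor is recovered by contracting a basis of the flat and restricting to a basis of the contraction plus $\ell$ dependent elements. The only divergence is in the minor-to-flat direction, where the paper invokes the full minor normal form ($X$ independent \emph{and} $Y$ coindependent), which forces $r_M(X)=r(M)-k$ immediately, whereas you assume only that $C$ is independent and then restore the exact rank by augmenting from $D$ via the subadditivity bound $r_{M/C}(D)\geq r(M)-r_M(C)-k$ --- a valid, slightly longer substitute for the same step.
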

\begin{proof}
	Suppose first that $M$ is not \kl-uniform. Then $M$ has an independent set $X$ and coindependent set $Y$ such that $M/X\delete Y\cong U_{k,k}\oplus U_{0,\ell}$ and $r_M(X)=r(M)-k$. Letting $Z$ denote the $\ell$ loops of $M/X\delete Y$, every element of $Z$ must be in the closure of $X$ in $M$. Thus, $\cl_M(X)$ is a rank-$(r(M)-k)$ flat of $M$ with nullity at least $\ell$.
	
	For the converse, suppose $M$ has a rank-$(r(M)-k)$ flat $F$ of nullity at least $\ell$. Contracting any basis for $F$ achieves a rank-$k$ matroid with at least $\ell$ loops. An appropriate restriction then yields a $U_{k,k}\oplus U_{0,\ell}$-minor.
\end{proof}

\section{Finiteness over \texorpdfstring{$GF(q)$}{GF(q)}}\label{sec: finiteness}
In this section, we prove Theorem~\ref{thm: finiteness} by showing that, for any $(k,\ell)$ pair and prime power $q$, there exists a constant $f(k,\ell,q)$ bounding the rank of any simple cosimple \kl-uniform $GF(q)$-representable matroid. We will require the following result, a rewording of \cite[Proposition 8]{Rajpal k-paving}.
\begin{lemma}\label{lem: Rajpal N(k,q)}
	For $k\geq 2$, let $M$ be a simple $(k,1)$-uniform matroid such that $M$ is $GF(q)$-representable and $r^*(M)>q$. Then there is a constant $g(k,q)$ such that $r(M)\leq g(k,q)$.
\end{lemma}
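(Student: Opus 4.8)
The plan is to recast $(k,\ell)$-uniformity with $\ell = 1$ as a girth condition and then exploit the sparseness this forces on a $GF(q)$-representation. By Proposition~\ref{prop: nullity def}, $M$ is $(k,1)$-uniform exactly when every rank-$(r(M)-k)$ flat is independent. Since every flat of rank at most $r(M)-k$ lies inside such a flat, and a subset of an independent set is independent, this is equivalent to asking that \emph{every} flat of rank at most $r(M)-k$ be independent, which in turn is equivalent to the statement that $M$ has no circuit with fewer than $r(M)-k+2$ elements. So I would begin by recording the clean reformulation: a simple $(k,1)$-uniform matroid is precisely a simple matroid in which every subset of at most $r(M)-k+1$ elements is independent.

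Next I would fix a $GF(q)$-representation and regard $E(M)$ as a set of $n := |E(M)| = r(M) + r^*(M) > r(M) + q$ distinct points of $PG(r(M)-1,q)$, the points being distinct because $M$ is simple. The girth bound says that every $r(M)-k+1$ of these points are linearly independent. The engine of the argument is the representability of uniform matroids: restricting to any flat $F$ of rank $r(M)-k+1$, every $(r(M)-k+1)$-subset of $F$ is a basis, so $M|F \cong U_{r(M)-k+1,|F|}$. The elementary bound that a $GF(q)$-representable uniform matroid $U_{\rho,m}$ with $\rho \geq 2$ satisfies $m \leq q+\rho-1$ (project the points from $\rho-2$ of them onto a line of $q+1$ points) then forces $|F| \leq r(M)-k+q$; that is, every rank-$(r(M)-k+1)$ flat of $M$ has nullity at most $q-1$. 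Thus the nullity of flats is uniformly controlled from rank $r(M)-k+1$ downwards.

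To convert this into a bound on $r(M)$ I would induct on $k$, the base case $k=1$ being the statement that a simple $GF(q)$-representable uniform matroid of corank exceeding $q$ has bounded rank, which is immediate from the same arc bound. For the inductive step, each hyperplane $H$ of $M$ gives a restriction $M|H$ that is simple, $GF(q)$-representable, of rank $r(M)-1$, and of girth at least $r(M)-k+2 = (r(M)-1)-(k-1)+2$, hence $(k-1,1)$-uniform; so if some hyperplane satisfies $r^*(M|H) > q$, the inductive hypothesis bounds $r(M)-1$ and we are done. The difficulty is entirely concentrated in the complementary case, in which every hyperplane has nullity at most $q$ while $r^*(M) > q$, so that all the nullity of $M$ sits in a single top cocircuit. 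Ruling out large rank here is the crux: one must show directly that a simple $GF(q)$-representable matroid of girth at least $r(M)-k+2$, all of whose hyperplanes have small nullity, cannot simultaneously have large rank and corank exceeding $q$. This is the genuine finite-geometry heart of the lemma, and it is exactly where the hypothesis $r^*(M) > q$ is used: it excludes the degenerate families — free and near-free extensions, and the trivial MDS-type configurations — in which rank could otherwise grow without bound. Carrying out this extremal estimate, rather than any of the reductions above, is the main obstacle, and it is what produces the explicit constant $g(k,q)$.
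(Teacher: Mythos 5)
Your proposal has a genuine gap, and you name it yourself: the final paragraph defers exactly the step that constitutes the lemma. Everything before it is a correct but essentially formal reduction. The girth reformulation (every rank-$(r(M)-k)$ flat independent iff girth at least $r(M)-k+2$) is right, the arc bound $m\leq q+\rho-1$ for a $GF(q)$-representable $U_{\rho,m}$ with $\rho\geq 2$ is right, and the hyperplane induction is set up correctly. But the ``complementary case'' you leave open is not a degenerate residue — it is the whole problem in disguise. Translating your condition: a hyperplane $H$ has nullity $r^*(M)-|C^*|+1$ where $C^*=E(M)\setminus H$ is the corresponding cocircuit, so ``every hyperplane has nullity at most $q$'' says precisely that every cocircuit has size at least $r^*(M)-q+1$, i.e.\ that $M^*$ satisfies a girth condition of the very same type you started from (with $r$ and $r^*$ swapped and $k$ replaced by $q+1$). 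Iterating your induction therefore cycles rather than terminates, and no bound on $r(M)$ emerges. Admitting that ``carrying out this extremal estimate \dots is the main obstacle'' is honest, but it means the statement is not proved.

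For the record, the paper does not prove this lemma either: it is stated as a rewording of Rajpal's Proposition~8, and Rajpal's argument is coding-theoretic — which is the missing idea. Concretely, view a $GF(q)$-representation $A$ of $M$ as a parity-check matrix. The circuits of $M$ are the minimal supports of the code $C=\ker A$, so your girth reformulation says $C$ is a linear code of length $n=r(M)+r^*(M)$, dimension $r^*(M)$, and minimum distance $d\geq r(M)-k+2$. Since $r^*(M)\geq q+1$, the Griesmer bound $n\geq \sum_{i=0}^{r^*(M)-1}\lceil d/q^i\rceil$ yields $r(M)+r^*(M)\geq (r(M)-k+2)+\lceil (r(M)-k+2)/q\rceil+(r^*(M)-2)$, hence $r(M)-k+2\leq kq$ and $r(M)\leq k(q+1)-2$. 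This single inequality replaces both your induction on $k$ and the unproved extremal step, uses the hypothesis $r^*(M)>q$ exactly where you suspected it must enter, and produces an explicit $g(k,q)$.
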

We will also make use of the following easy lemma.
\begin{lemma}\label{lem: simple}
	If $M$ is a matroid of rank $r\geq 2$, then $M$ is simple if and only if it is $(r-1,1)$-uniform.
\end{lemma}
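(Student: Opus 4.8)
The plan is to reduce the statement to Proposition~\ref{prop: nullity def}, since $(r-1,1)$-uniformity is exactly the extremal instance in which the relevant flats have rank one. Applying that proposition with $k=r-1$ and $\ell=1$, a rank-$r$ matroid $M$ is $(r-1,1)$-uniform precisely when every rank-$(r-(r-1))$ flat, that is, every rank-$1$ flat, has nullity strictly less than $1$. Because nullity is a nonnegative integer, this condition is equivalent to the assertion that every rank-$1$ flat of $M$ has nullity $0$, i.e.\ is a singleton. The whole task thus reduces to showing that every rank-$1$ flat of $M$ is a singleton if and only if $M$ is simple.

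For the forward direction I would assume $M$ is simple and take an arbitrary rank-$1$ flat $F$. Since $M$ is loopless, $\cl(\emptyset)=\emptyset$, and since $M$ has no parallel pair, the closure of any nonloop element is a singleton; as $F$ is the closure of any nonloop element it contains, $F$ is a single element and has nullity $0$. By the proposition, $M$ is $(r-1,1)$-uniform.

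For the converse I would show that if $M$ is not simple, then some rank-$1$ flat has nullity at least $1$. If $M$ has two parallel elements $e$ and $f$, then $\cl(\{e\})$ is a rank-$1$ flat containing both, and so has nullity at least $1$. If instead $M$ has a loop $e$, then, using $r\geq 2$ to guarantee that a nonloop element $f$ exists, the flat $\cl(\{f\})$ has rank $1$ and contains both $f$ and the loop $e$, again yielding nullity at least $1$. In either case Proposition~\ref{prop: nullity def} shows that $M$ is not $(r-1,1)$-uniform, which completes the equivalence.

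The argument is essentially bookkeeping once Proposition~\ref{prop: nullity def} is available; the one point that requires care is the loop case, where one must remember that a loop on its own spans a rank-$0$ flat rather than a rank-$1$ flat, so that the hypothesis $r\geq 2$ is needed to produce a nonloop element whose closure captures the loop. This is also precisely why the statement restricts to $r\geq 2$: for $r=1$ the parameter $k=r-1=0$ would fall outside the positive-integer range for which $(k,\ell)$-uniformity is defined.
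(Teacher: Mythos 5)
Your proof is correct, and it follows exactly the route the paper intends: the paper states this lemma without proof (calling it easy), having just announced that Proposition~\ref{prop: nullity def} will be treated as an alternate definition, so translating $(r-1,1)$-uniformity into ``every rank-$1$ flat has nullity $0$'' and checking the loop and parallel-pair cases is precisely the expected argument. Your handling of the loop case via a nonloop element whose closure captures the loop, and your observation that $r\geq 2$ is what keeps $k=r-1$ a positive integer, are both sound (though note a nonloop already exists whenever $r\geq 1$; the hypothesis $r\geq 2$ is needed only for the positivity of $k$).
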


Theorem~\ref{thm: finiteness} is then a direct consequence of the following:

\begin{prop}\label{prop: f(k,l,q)}
For every pair $(k,\ell)$ of positive integers and every prime power $q$, there is a constant $f(k,\ell,q)$ such that if $M$ is a simple cosimple $GF(q)$-representable \kl-uniform matroid, then $r(M)\leq f(k,\ell,q)$. Moreover, if $k\geq 2$, then $$f(k,\ell,q)\leq \max\{f(k-1,\ell+1,q),f(1,\ell,q)+(k-1)\}.$$
\end{prop}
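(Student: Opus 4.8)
The plan is to induct on $k$, proving for all $\ell$ simultaneously that $f(k,\ell,q)$ exists and, for $k\ge 2$, satisfies the stated recursion. For the base case $k=1$ I would argue separately according to $\ell$. If $\ell=1$, then $M$ is $(1,1)$-uniform, hence uniform, so a simple cosimple such matroid is $U_{r,n}$ with $2\le r\le n-2$; deleting $n-r-2$ elements yields the restriction $U_{r,r+2}$, whose dual $U_{2,r+2}$ is $GF(q)$-representable only when $r+2\le q+1$, forcing $r\le q-1$. If $\ell\ge 2$, I would dualise: $M$ is $(1,\ell)$-uniform, simple, cosimple and $GF(q)$-representable exactly when $M^*$ is $(\ell,1)$-uniform with the same three adjectives, and $r(M)=r^*(M^*)$. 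Applying Lemma~\ref{lem: Rajpal N(k,q)} to $M^*$, either $r^*(M^*)\le q$, or $r(M^*)\le g(\ell,q)$; in the latter case the simplicity of $M^*$ gives $|E(M^*)|\le (q^{g(\ell,q)}-1)/(q-1)$, so that $r(M)=|E(M^*)|-r(M^*)$ is bounded. This establishes $f(1,\ell,q)$ for every $\ell$.

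For the inductive step, take $k\ge 2$ and assume $f(k-1,\ell+1,q)$ and $f(1,\ell,q)$ exist. Let $M$ be simple, cosimple, $GF(q)$-representable and \kl-uniform, and split into two cases. If $M$ happens to be $(k-1,\ell+1)$-uniform, the inductive hypothesis immediately gives $r(M)\le f(k-1,\ell+1,q)$. Otherwise, Proposition~\ref{prop: nullity def} supplies a rank-$(r(M)-(k-1))$ flat $F$ of nullity at least $\ell+1$, and the whole task reduces to showing that $M|F$ is a simple, cosimple, $GF(q)$-representable, $(1,\ell)$-uniform matroid of rank exactly $r(M)-(k-1)$; this would yield $r(M)-(k-1)\le f(1,\ell,q)$, and hence the second term of the recursion.

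Three of these four properties come for free: as a restriction, $M|F$ inherits simplicity and $GF(q)$-representability, and it has rank $r_M(F)=r(M)-(k-1)$. For $(1,\ell)$-uniformity I would use Proposition~\ref{prop: nullity def} with $k=1$: a hyperplane $H$ of $M|F$ is a rank-$(r(M)-k)$ subset of $F$ that is closed in $M|F$, so $\cl_M(H)$ is a rank-$(r(M)-k)$ flat of $M$ with $\cl_M(H)\cap F=H$. Since $M$ is \kl-uniform, $\cl_M(H)$ has nullity less than $\ell$, and because $H\sse\cl_M(H)$ the nullity of $H$ in $M|F$ is no larger, hence also less than $\ell$.

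The crux, which I expect to be the main obstacle, is proving that $M|F$ is cosimple, i.e.\ that it has no coloops and no $2$-element cocircuits. Both are handled by contradiction, exploiting the large nullity of $F$. If $e$ were a coloop of $M|F$, then $F\setminus e$ would be closed in $M|F$, so $\cl_M(F\setminus e)$ is a rank-$(r(M)-k)$ flat of $M$ meeting $F$ in $F\setminus e$; a direct count gives it nullity at least the nullity of $F$, which is at least $\ell+1$, contradicting \kl-uniformity. Likewise, a $2$-element cocircuit $\{e,f\}$ of $M|F$ would make $F\setminus\{e,f\}$ a hyperplane of $M|F$, whence $\cl_M(F\setminus\{e,f\})$ is a rank-$(r(M)-k)$ flat of nullity at least one less than that of $F$, i.e.\ at least $\ell$, again contradicting \kl-uniformity. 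With $M|F$ shown to be simple, cosimple, $GF(q)$-representable and $(1,\ell)$-uniform of the correct rank, the two cases together give the recursive bound, and the existence of $f(k,\ell,q)$ for all pairs $(k,\ell)$ then follows by induction.
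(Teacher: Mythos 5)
Your proposal is correct and follows essentially the same route as the paper: induction on $k$, with the base case $k=1$ split into $\ell=1$ (uniform, so $r\le q-1$) and $\ell\ge 2$ (the dual of Lemma~\ref{lem: Rajpal N(k,q)} plus the projective-geometry size bound), and the inductive step splitting on whether $M$ is $(k-1,\ell+1)$-uniform and otherwise restricting to a rank-$(r(M)-k+1)$ flat $F$ of nullity at least $\ell+1$. The only local difference is that you establish cosimplicity of $M|F$ (and its $(1,\ell)$-uniformity) by direct nullity counts on coloops and $2$-element cocircuits via Proposition~\ref{prop: nullity def}, where the paper instead applies Lemma~\ref{lem: simple} to $(M|F)^*$; both arguments are valid and of comparable length.
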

\begin{proof}
Fixing $q$, we perform induction on $k$. The base case $k=1$ we split into two parts.  Firstly, if $k=\ell=1$, then $M$ is uniform and $r(M)\leq q-1$, as otherwise $M$ has a $U_{q,q+2}$ minor, a contradiction to $GF(q)$-representability. If $k=1$ and $\ell \geq2$, then by the dual of Lemma~\ref{lem: Rajpal N(k,q)}, either $r(M)\leq q$ or $r^*(M)\leq g(\ell,q)$. In the latter case, since $M$ is cosimple, $M^*$ is a restriction of $PG(g(\ell,q)-1,q)$. It follows that  $r(M)\leq (q^{g(\ell,q)}-1)/(q-1) - g(\ell,q)$. Thus, $f(1,\ell,q)$ exists for all $\ell\geq 1$.

Now suppose $k\geq 2$ and that $f(k',\ell',q)$ exists for all $k'<k$, $\ell' \geq 1$. If $M$ is also $(k-1,\ell+1)$-uniform, then $r(M)\leq f(k-1,\ell+1,q)$ by induction. Otherwise, $M$ has a rank-$(r(M)-k+1)$ flat $F$ with nullity at least $\ell+1$. As $M$ is \kl-uniform, $M|F$ is $(1,\ell)$-uniform. By duality, $(M|F)^*$ is $(\ell,1)$-uniform and, as $r^*(M|F)\geq \ell+1$, it follows Lemma~\ref{lem: simple} that $M|F$ is cosimple. Thus, $M|F$ is a simple cosimple binary $(1,\ell)$-uniform matroid. By induction, $r(M|F)\leq f(1,\ell,q)$ and hence $r(M)\leq f(1,\ell,q)+(k-1)$. We conclude that $f(k,\ell,q)$ exists and $f(k,\ell,q)\leq \max\{f(k-1,\ell+1,q),f(1,\ell,q)+(k-1)\}$.
\end{proof}
 
The next two results are extracted from \cite{Acketa} and \cite{Rajpal k-paving} respectively.
\begin{lemma}\label{lem: f112 and f022}
	$f(2,1,2)=f(1,2,2)= 4$.
\end{lemma}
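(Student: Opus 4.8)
The plan is to read the value $f(2,1,2)$ directly off Acketa's classification of binary paving matroids~\cite{Acketa} and then deduce $f(1,2,2)$ from it by duality. By Proposition~\ref{prop: nullity def}, a simple binary matroid $M$ of rank $r$ is paving (that is, $(2,1)$-uniform) exactly when every flat of rank at most $r-2$ is independent; for $r\ge 4$ this forces every rank-$2$ flat to have exactly two points, so $M$ is a cap (a set with no three collinear points) in $PG(r-1,2)$. The task for $f(2,1,2)$ is therefore to show that such an $M$ that is moreover cosimple has rank at most $4$, with rank $4$ attained. Attainment is witnessed by $AG(3,2)$: it is the complement of a plane in $PG(3,2)$, hence an $8$-point cap whose rank-$2$ flats each contain two points, it is paving of rank $4$, and being $3$-connected it is both simple and cosimple. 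The upper bound $r\le 4$ is precisely the content of Acketa's enumeration, which I would invoke. Together these give $f(2,1,2)=4$.

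For $f(1,2,2)$ I would pass to the dual. If $M$ is a simple cosimple binary $(1,2)$-uniform matroid and $N=M^*$, then $N$ is binary, $N$ is $(2,1)$-uniform by the duality principle recorded in Section~\ref{sec: intro}, and $N$ is simple and cosimple because $M$ is cosimple and simple respectively. Since $r(M)=r^*(N)=|E(N)|-r(N)$, we obtain
\[
f(1,2,2)=\max\{\,|E(N)|-r(N)\ :\ N \text{ a simple cosimple binary paving matroid}\,\},
\]
so the problem becomes one of maximising the corank rather than the rank. By the bound just established, $r(N)\le 4$. A simple binary matroid of rank $r$ embeds in $PG(r-1,2)$, so it has at most $2^r-1$ elements; when $r=4$ the cap condition sharpens this to $8$ (each of the $\binom{n}{2}$ point-pairs lies on a distinct one of the $35$ lines of $PG(3,2)$, forcing $n\le 8$, with equality for $AG(3,2)$), and for $r\le 2$ the corank is at most $1$. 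Hence $|E(N)|-r(N)\le 4$ in every case, and equality holds both for $N=F_7$ (rank $3$, seven elements, paving since its rank-$1$ flats are independent) and for $N=AG(3,2)$. Therefore $f(1,2,2)=4$.

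The main obstacle is the rank bound $r\le 4$ underlying the first paragraph: ruling out simple cosimple binary paving matroids of rank $5$ or more is exactly Acketa's classification, and a self-contained argument would require the full structural analysis of caps in $PG(r-1,2)$ all of whose rank-$(r-2)$ flats are independent, excluding them for $r\ge 5$ under the cosimple hypothesis. Granting that input, the duality reduction and the elementary point counts for small rank that deliver $f(1,2,2)=4$ are routine.
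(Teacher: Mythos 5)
Your proposal is correct and rests on the same essential input as the paper, whose ``proof'' of this lemma is simply to extract both values from Acketa's classification of binary paving matroids~\cite{Acketa}, exactly the citation your argument pivots on for the bound $r\le 4$. Your duality reduction and the cap-counting in $PG(3,2)$ for the $f(1,2,2)$ half are sound and make that part slightly more self-contained, since you need only the rank bound from Acketa rather than his full list, with $AG(3,2)$ and $F_7$ correctly witnessing sharpness.
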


\begin{lemma}\label{lem: f212 abd f032}
	$f(3,1,2)=5$ and $f(1,3,2)=11$.
\end{lemma}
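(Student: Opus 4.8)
The plan is to read both constants off an understanding of binary $(3,1)$-uniform matroids, with the second value handled by duality: a matroid is $(1,3)$-uniform exactly when its dual is $(3,1)$-uniform, and this interchanges simplicity with cosimplicity and rank with corank. Hence $f(3,1,2)$ is the largest rank, and $f(1,3,2)$ the largest corank, of a simple cosimple binary $(3,1)$-uniform matroid, and it suffices to understand these matroids. Since the statement is extracted from Rajpal, one route is to cite the explicit classification directly; I would instead give a self-contained reconstruction, as follows.

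The key first step is to reformulate the hypothesis. By Proposition~\ref{prop: nullity def}, a rank-$r$ matroid is $(3,1)$-uniform exactly when every rank-$(r-3)$ flat is independent, and for $r\geq 3$ I would observe that this is equivalent to the girth of $M$ being at least $r-1$ (equivalently, every $(r-2)$-element set is independent): a rank-$(r-3)$ flat of positive nullity contains a circuit of size at most $r-2$, and conversely any circuit of size at most $r-2$ can be grown to a rank-$(r-3)$ flat of positive nullity. For a simple binary $M$ of rank $r$ on $n$ elements, the standard representation $[\,I_r \mid A\,]$ then translates this into a statement about the binary code $C$ given by the null space of the representing matrix: $C$ has length $n$ and dimension $n-r=r^*(M)$, its minimum distance equals the girth of $M$, and so $M$ is $(3,1)$-uniform if and only if $C$ has minimum distance at least $r-1$.

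For the rank bound I would proceed by a coding estimate. Cosimplicity makes $M^*$ simple, so $r+r^*\leq 2^{r^*}-1$, whence $r^*\geq 4$ as soon as $r\geq 5$. Feeding the parameters of $C$, namely $[\,r+r^*,\ r^*,\ \geq r-1\,]$, into the Griesmer bound yields a contradiction for every $r\geq 6$, since the right-hand side $\sum_{i=0}^{r^*-1}\lceil (r-1)/2^i\rceil$ already exceeds $r+r^*$ (for instance, at $r=6$ it equals $r^*+7>r^*+6$). As $AG(4,2)$ is a simple cosimple binary $(3,1)$-uniform matroid of rank $5$ — its rank-$2$ flats, being affine lines, have only two points and so are independent — this gives $f(3,1,2)=5$.

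With the rank now bounded by $5$, the corank is maximised by maximising $|E|$. A simple rank-$5$ binary $(3,1)$-uniform matroid is precisely a cap (no three collinear points) in $PG(4,2)$, whose maximum size is $16$, attained by $AG(4,2)$; in rank $4$ one may instead take all $15$ points of $PG(3,2)$. Both yield corank $11$, and no smaller rank does better, so $f(1,3,2)=11$. I expect the main obstacle to lie in the upper bounds rather than the extremal examples: establishing rigorously that ranks $6$ and above cannot occur (the Griesmer computation above, matching the explicit value $g(3,2)=5$ in Lemma~\ref{lem: Rajpal N(k,q)}), and invoking the exact maximum cap size in $PG(4,2)$ to make the corank bound tight.
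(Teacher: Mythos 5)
Your proposal is correct, but it is worth knowing that the paper contains no proof of this lemma at all: the two values are simply quoted from Rajpal's classification of binary $k$-paving matroids \cite{Rajpal k-paving}, so your argument is a genuine self-contained reconstruction rather than a parallel of anything in the text. Your route is close in spirit to Rajpal's own methods (his paper works through the correspondence between binary matroids and codes, with Reed--Muller codes playing the extremal role), but it is lighter, extracting exactly the two extremal constants without classifying all binary $(3,1)$-uniform matroids. The individual steps check out: the equivalence of $(3,1)$-uniformity with girth at least $r-1$ is valid for $r\geq 3$ (a circuit of size at most $r-2$ spans a flat of rank at most $r-3$, which extends to a dependent rank-$(r-3)$ flat, and conversely); the circuit space of $[\,I_r\mid A\,]$ is indeed an $[r+r^*,\,r^*,\,\mathrm{girth}]$ binary code; cosimplicity gives $r+r^*\leq 2^{r^*}-1$, hence $r^*\geq 4$ once $r\geq 5$; and for $r\geq 6$ the Griesmer sum satisfies $\sum_{i=0}^{r^*-1}\lceil (r-1)/2^i\rceil \geq (r-1)+3+2+1+(r^*-4)=r+r^*+1>r+r^*$, so the contradiction persists for all $r\geq 6$, not only $r=6$ (the first term alone grows with $r$ while the length grows by one). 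Your Griesmer step in effect reproves the $q=2$, $k=3$ case of Lemma~\ref{lem: Rajpal N(k,q)}. For the corank, your case analysis over ranks $\leq 5$ is complete (ranks $\leq 3$ give corank at most $4$), and the one external fact you invoke, that a cap in $PG(4,2)$ has at most $16$ points, admits a one-line pigeonhole proof you could include for full self-containment: if $S$ is a cap and $a_0\in S$, then $S$ and $a_0+(S\setminus\{a_0\})$ are disjoint subsets of the $2^{n+1}-1$ nonzero vectors, giving $|S|\leq 2^n$, with $AG(4,2)$ attaining equality. Both extremal examples ($AG(4,2)$ and $PG(3,2)$) are correctly verified to be simple, cosimple and $(3,1)$-uniform, so both stated values follow.
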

It follows Proposition~\ref{prop: f(k,l,q)}, that a simple cosimple binary $(2,2)$-uniform matroid has rank at most $11$. A more helpful bound, that will be instrumental in our determination of the binary $(2,2)$-uniform matroids is the following:
\begin{lemma}\label{lem: r,r* bound}
	Let $M$ be a simple cosimple binary matroid. If $M$ is \linebreak $(2,2)$-uniform, then $\min\{r(M),r^*(M)\}\leq 5$.
\end{lemma}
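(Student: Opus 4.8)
The plan is to reduce the claim to the single implication that $r(M)>5$ forces $r^*(M)\leq 5$; this suffices, since if $r(M)\leq 5$ the conclusion is immediate, and otherwise it follows from the implication. The engine of the argument is the case split used in the proof of Proposition~\ref{prop: f(k,l,q)}, specialised to the pair $(2,2)$ over $GF(2)$, combined with the explicit values $f(1,2,2)=4$ and $f(3,1,2)=5$ recorded in Lemmas~\ref{lem: f112 and f022} and~\ref{lem: f212 abd f032}.

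So suppose $r(M)>5$, and split on whether $M$ is $(1,3)$-uniform. If it is not, then by Proposition~\ref{prop: nullity def} there is a rank-$(r(M)-1)$ flat $H$, that is, a hyperplane, of nullity at least $3$. I would then show that $M|H$ is a simple, cosimple, binary, $(1,2)$-uniform matroid of rank $r(M)-1$. Simplicity and binarity are inherited from $M$. For $(1,2)$-uniformity, note that any rank-$(r(M)-2)$ flat $G$ of $M|H$ is contained in the flat $\cl_M(G)$ of $M$, which again has rank $r(M)-2$ and so, by the $(2,2)$-uniformity of $M$ and Proposition~\ref{prop: nullity def}, has nullity at most $1$; hence $G$ has nullity at most $1$ as well. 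Cosimplicity follows exactly as in the proof of Proposition~\ref{prop: f(k,l,q)}: the dual $(M|H)^*$ is $(2,1)$-uniform and has rank $r^*(M|H)=|H|-(r(M)-1)\geq 3$, hence is $(r^*(M|H)-1,1)$-uniform and therefore simple by Lemma~\ref{lem: simple}. Applying $f(1,2,2)=4$ now gives $r(M)-1=r(M|H)\leq 4$, contradicting $r(M)>5$.

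Therefore $M$ is $(1,3)$-uniform. By duality $M^*$ is $(3,1)$-uniform, and $M^*$ is simple, cosimple and binary, so the bound $f(3,1,2)=5$ yields $r^*(M)=r(M^*)\leq 5$, completing the argument.

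The bookkeeping—inheritance of simplicity and binarity, and the nullity comparison between $G$ and $\cl_M(G)$—is routine. The one step that requires genuine care is checking that $M|H$ is cosimple, since only then may we invoke the sharp value $f(1,2,2)=4$ rather than a weaker rank bound valid without the cosimple hypothesis; I expect this verification, modelled on the corresponding step in Proposition~\ref{prop: f(k,l,q)}, to be the main obstacle.
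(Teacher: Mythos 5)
Your proposal is correct and takes essentially the same route as the paper: both split on whether $M$ is $(1,3)$-uniform, settling that case via duality and $f(3,1,2)=5$ from Lemma~\ref{lem: f212 abd f032}, and otherwise pass to a hyperplane $H$ of nullity at least $3$, verify via Lemma~\ref{lem: simple} that $M|H$ is cosimple, and apply $f(1,2,2)=4$ from Lemma~\ref{lem: f112 and f022} to get $r(M)\leq 5$. The step you single out as needing care---the cosimplicity of $M|H$ via the $(2,1)$-uniformity of its dual---is exactly the argument the paper gives.
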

\begin{proof}
	If $M$ is $(1,3)$-uniform, then $M^*$ is $(3,1)$-uniform and $r^*(M)\leq 5$ by Lemma~\ref{lem: f212 abd f032}. Otherwise, $M$ has a hyperplane $H$ of nullity at least $3$. The matroid $M|H$ is simple and $(1,2)$-uniform. Furthermore, its dual is $(2,1)$-uniform and has rank at least $3$. Thus, by Lemma~\ref{lem: simple}, $M|H$ is cosimple. It follows Lemma~\ref{lem: f112 and f022} that $r(M|H)\leq 4$ and, consequently, $r(M)\leq 5$.
\end{proof}
The remaining two sections will make frequent use of the fact that a matroid $M$ is $(2,2)$-uniform if and only if the union of any pair of circuits of $M$ has rank at least $r(M)-1$.
\section{The \texorpdfstring{$(2,2)$}{(2,2)}-uniform matroids that are not \texorpdfstring{$3$}{3}-connected}\label{sec: not 3conn}
In this section we describe all $(2,2)$-uniform matroids which are not \linebreak $3$-connected and explicitly list those that are  binary. The following results contain some redundancy but have been chosen for their clarity and to emphasise links to paving matroids. A matroid $M$ is \emph{sparse paving} if both $M$ and $M^*$ are paving, or equivalently, if $M$ is both $(2,1)$ and $(1,2)$-uniform.

\begin{prop}\label{prop: discon 22-uniform}
	Let $M$ be a disconnected matroid. Then $M$ is \linebreak $(2,2)$-uniform if and only if
	\begin{enumerate}[label=\text{\emph{(\roman*)}},leftmargin=*,align=left]
		\item $M$ or $M^*$ is paving; or
		\item $M\cong M_p\oplus U_{0,1}$ or $M\cong M_p^*\oplus U_{1,1}$, where $M_p$ is a paving matroid; or
		\item $M\cong M_p \oplus U_{1,2}$, where $M_p$ is a sparse paving matroid.
	\end{enumerate}
\end{prop}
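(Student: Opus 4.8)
The plan is to decompose $M$ into connected components and convert $(2,2)$-uniformity, via Proposition~\ref{prop: nullity def}, into a constraint on how nullity distributes across them. Write $M=M_1\oplus\cdots\oplus M_c$ with each $M_i$ connected and $c\ge 2$. Every flat of $M$ splits as $\bigoplus_i F_i$ with $F_i$ a flat of $M_i$, so a rank-$(r(M)-2)$ flat is exactly a choice of the $F_i$ whose coranks $d_i=r(M_i)-r_{M_i}(F_i)$ satisfy $\sum_i d_i=2$, and its nullity is the sum of the nullities of the $F_i$. Letting $\mu_i(d)$ denote the largest nullity of a corank-$d$ flat of $M_i$ and $N_i$ the nullity of $M_i$, Proposition~\ref{prop: nullity def} becomes: $M$ is $(2,2)$-uniform if and only if $\sum_i\mu_i(d_i)\le 1$ for every composition $2=\sum_i d_i$. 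Only two shapes arise, giving, with $N=\sum_l N_l$ and $\mu_i(0)=N_i$, the inequalities
\begin{align*}
\mu_i(2)+(N-N_i) &\le 1 \qquad (r(M_i)\ge 2),\\
\mu_i(1)+\mu_j(1)+(N-N_i-N_j) &\le 1 \qquad (i\ne j,\ r(M_i),r(M_j)\ge 1).
\end{align*}
I would also record the dictionary $\mu_i(2)=0\iff M_i$ is paving and $\mu_i(1)\le 1\iff M_i^*$ is paving, together with the elementary facts that a nonempty connected matroid of nullity $0$ is a coloop and one of nullity $1$ is a circuit.

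First suppose some component, say $M_1$, has rank at least $2$. The first inequality forces $\sum_{l\ne 1}N_l\le 1$, so every other component is a coloop except for at most one of nullity $1$ (a loop, the pair $U_{1,2}$, or a larger circuit). Feeding the coloop--coloop and coloop--$M_1$ instances of the second inequality back in bounds the number of coloops and pins down the structure of $M_1$ as one of the families appearing in (i)--(iii). Checking the finitely many resulting shapes matches each to (i), (ii), or (iii); in particular, when two or more coloops occur, $N_1=1$ forces $M_1$ to be a circuit and the whole of $M$ to be copaving, landing in (i).

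If instead no component has rank at least $2$, every component is a loop or a parallel class $U_{1,a}$, and for $U_{1,a}$ one has $\mu(1)=0$; the second inequality, applied to every pair of rank-$1$ components, then bounds by $1$ the total nullity lying outside any chosen pair of them. A short sorting argument over the parallel-class sizes confirms that every solution is of type (i), (ii), or (iii). For the converse I would argue directly. Condition (i) is immediate from monotonicity, since a paving matroid is $(2,1)$-uniform and a copaving matroid is $(1,2)$-uniform, both implying $(2,2)$-uniformity. For (ii) and (iii) I would use the direct-sum flat description: adjoining a loop to a paving matroid raises every critical flat nullity by exactly $1$ (so the loop form of (ii) is $(2,2)$-uniform, and the coloop form follows dually), while adjoining $U_{1,2}$ creates two families of rank-$(r(M)-2)$ flats, one recording a corank-$2$ flat of $M_p$ and one recording a hyperplane of $M_p$; these are controlled precisely when $M_p$ is simultaneously paving and copaving, which explains the sparse-paving hypothesis in (iii).

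The main obstacle is the forward bookkeeping: loops, parallel classes, circuits, and coloops each interact slightly differently with the two inequalities, and---as the statement already signals---several surviving configurations satisfy more than one of (i)--(iii), so the task is to make the cases exhaustive rather than disjoint. The conceptual crux is that the within-component condition is exactly the $d_i=2$ instance, so it is vacuous whenever $M_i$ has nullity $1$; this is what permits arbitrarily many parallel elements or a full uniform component in the extremal cases while still excluding a $U_{2,2}\oplus U_{0,2}$-minor.
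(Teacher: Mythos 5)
Your proposal is correct, but it reaches the result by a genuinely different route from the paper. The paper's proof never sets up the flat calculus: it works from the reformulation stated at the end of Section~\ref{sec: finiteness} that $M$ is $(2,2)$-uniform if and only if the union of any two circuits has rank at least $r(M)-1$, strips a loop immediately into case (ii), reduces by duality to a loopless and coloopless $M$, dispatches $r(M)\le 2$ or $r^*(M)\le 2$ into (i), notes that if every component had rank and corank at least two then one component alone would contain two circuits whose union has rank at most $r(M)-2$, thereby forces (up to duality) a rank-one component, pins it to $U_{1,2}$, and reads off (iii) --- roughly ten lines. You instead translate Proposition~\ref{prop: nullity def} into the two componentwise inequalities $\mu_i(2)+(N-N_i)\le 1$ and $\mu_i(1)+\mu_j(1)+(N-N_i-N_j)\le 1$ and enumerate solutions. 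This is heavier, and your forward analysis is left schematic (``feeding back,'' ``a short sorting argument''), but the skeleton does close: monotonicity of nullity under inclusion gives $\mu_i(2)\le\mu_i(1)\le N_i$, and the cases worth writing out explicitly all land correctly --- a second component of rank at least two forces both to be circuits, landing in (i) or (ii); a $U_{1,2}$ together with one coloop alongside a circuit still yields (iii), since $U_{r,r+1}\oplus U_{1,1}$ is sparse paving; and fewer than two rank-one components in your final case gives $r(M)\le 1$, hence (i) vacuously. What your approach buys is that the converse comes for free, since each of (i)--(iii) is certified by the same two inequalities, and your observation that the $M_p\oplus U_{1,2}$ shape is $(2,2)$-uniform \emph{precisely} when $M_p$ is both paving and copaving sharpens case (iii) into an exact criterion, where the paper only asserts the converse as ``easily seen.'' What the paper's approach buys is brevity and the duality trick that halves the case analysis.
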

\begin{proof}
The disconnected matroids of type (i), (ii) and (iii) are easily seen to be $(2,2)$-uniform. To see that there are no others, let $M$ be a disconnected $(2,2)$-uniform matroid. If $M$ has a loop $l$, then $M\delete l$ is certainly paving and (ii) holds. Otherwise, by duality, we may assume that $M$ has no loops or coloops. It follows that if $r(M)\leq 2$ or $r^*(M)\leq 2$, then (i) holds. Hence, we may also assume that $r(M),r^*(M)\geq 3$. Now, if every component of $M$ has rank, corank at least two, then each component contains at least two circuits and the union of any two such circuits has rank less than $r(M)-1$, a contradiction to the $(2,2)$-uniform property. Thus, up to duality, $M$ has at least one rank-$1$ component $M_1$. If $|E(M_1)|\geq 3$, then by the $(2,2)$-uniform property, $r(M)\leq 2$, a contradiction. Thus, $M_1\cong U_{1,2}$. It then follows easily from the $(2,2)$-uniform property that $M\delete E(M_1)$ is both $(2,1)$-uniform and $(1,2)$-uniform. In particular, (iii) is satisfied.
\end{proof}
We follow Oxley~\cite{Oxley2011} in using $P(M_1,M_2)$ to denote the parallel connection of matroids $M_1$ and $M_2$ across some common basepoint.
\begin{prop}\label{prop: conn 22-uniform}
	Let $M$ be a connected matroid that is not $3$-connected. Then $M$ is $(2,2)$-uniform if and only if
	
	\begin{enumerate}[label=\text{\emph{(\roman*)}},leftmargin=*,align=left]
		\item $M$ or $M^*$ is  paving; or
		\item $M$ or $M^*$ has rank $3$ and no parallel class of size more than two; or
		\item $M$ has a parallel or series pair $\{p,p'\}$ such that $M\delete p/p'$ is sparse paving; or
		\item $M= P(N,U_{2,4})\delete p$, where $N$ is a connected matroid such that $N/p$ and $N^*/p$ are paving.
		
	\end{enumerate}
\end{prop}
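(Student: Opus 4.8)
The plan is to prove both directions using the reformulation noted at the end of Section~\ref{sec: finiteness}, namely that $M$ is $(2,2)$-uniform if and only if the union of any two circuits has rank at least $r(M)-1$, together with the self-duality of the property ($M$ is $(2,2)$-uniform iff $M^*$ is), which lets me argue up to duality throughout.

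For the ``if'' direction I would check each family directly against the circuit-union criterion. Type (i) is immediate, since a paving matroid is $(2,1)$-uniform and hence $(2,2)$-uniform, and the dual case follows by self-duality. For (ii), a connected rank-$3$ matroid is loopless, so two circuits can have union of rank at most $r(M)-2=1$ only if both lie in a common rank-$1$ flat; this forces a parallel class of size at least three, which is excluded, and the dual case is symmetric. For (iii) and (iv) I would compute the circuits of $M$ from those of the reduced matroid (using that $M\delete p/p'$ is sparse paving in (iii), and the $2$-sum $M=P(N,U_{2,4})\delete p=N\oplus_2 U_{2,4}$ in (iv)) and verify the rank-of-union bound; the sparse-paving and paving hypotheses are exactly what prevents two circuits from collapsing in rank.

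The substance lies in the ``only if'' direction. Since $M$ is connected but not $3$-connected it has a nontrivial $2$-separation, so I would realise $M$ as a $2$-sum $M=M_1\oplus_2 M_2$ across a basepoint $p$, with $A=E(M_1)\delete p$ and $B=E(M_2)\delete p$ both of size at least two and $r(M)=r(M_1)+r(M_2)-1$; by passing to a leaf of the canonical tree decomposition I may take $M_2$ to be $3$-connected. The engine is to apply the circuit-union criterion to a circuit $C\subseteq A$ of $M_1$ and a circuit $D\subseteq B$ of $M_2$, each avoiding $p$: the $2$-sum rank formula gives
\[
\bigl(r(M_1)-r_{M_1}(C)\bigr)+\bigl(r(M_2)-r_{M_2}(D)\bigr)+\epsilon\leq 2,
\]
where $\epsilon=1$ precisely when both $C$ and $D$ span the basepoint in their respective sides. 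Read across all such pairs, this severely constrains the circuit structure of each side; I would translate ``every circuit of $M_i\delete p$ is spanning or near-spanning'' into paving-type statements about $M_i$ and its minor $M_i/p$. Incorporating the combined circuits through $p$ and the same-side pairs then pins down the $3$-connected part $M_2$ to a short list, which I expect to be $U_{1,3}$ or its dual $U_{2,3}$ (contributing a parallel or series pair, landing in (iii)) and $U_{2,4}$ (landing in (iv)), with the degenerate low-rank or low-corank possibilities giving (i) or (ii).

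The hard part is this final case analysis on $M_2$. The decisive quantitative point I expect to need is that a part $U_{2,n}$ with $n\geq 5$ contributes two distinct triangles whose union has rank only $2$, violating the criterion as soon as $r(M)\geq 4$; this is exactly what singles out $U_{2,4}$, whose basepoint-deletion is the single-circuit matroid $U_{2,3}$, and simultaneously confines the genuinely rank-$3$ exceptions to (ii). The remaining delicate step is showing that in case (iv) the interaction between the attached line and the circuits of $N$ through $p$ forces both $N/p$ and $N^{*}/p$ to be paving, and that in case (iii) the reduced matroid $M\delete p/p'$ is not merely paving but sparse paving; both should follow by pairing a combined circuit through the basepoint against an arbitrary second circuit and invoking the displayed inequality, but arranging the duality so that each configuration is accounted for exactly once is where I expect the care to be needed.
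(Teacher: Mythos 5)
Your machinery is essentially the paper's: after reducing to $r(M),r^*(M)\geq 4$, decompose across a $2$-separation as $M=P(M_1,M_2)\delete p$ and test pairs of circuits against the criterion that any two distinct circuits have union of rank at least $r(M)-1$; your displayed cross-side inequality is correct (it is the parallel-connection rank formula), as is the observation that a line with at least five points forces $r(M)\leq 3$. The genuine gap is the normalisation step: you cannot pass to a $3$-connected leaf $M_2$ of the canonical tree decomposition and then pin \emph{that} part down to $\{U_{1,3},U_{2,3},U_{2,4}\}$. What the circuit analysis actually forces is that \emph{some} side of the $2$-separation is small, and this cannot be pre-assigned to the leaf you happened to choose. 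Concretely, take $M=P(AG(3,2),U_{2,4})\delete p$ with $p\in AG(3,2)$. Since $AG(3,2)$ is self-dual and $AG(3,2)/p\cong F_7$ is paving, $M$ is of type (iv), hence $(2,2)$-uniform, and $r(M)=r^*(M)=5$, so it survives your degenerate-case reduction. Its canonical tree has exactly two parts, $AG(3,2)$ and $U_{2,4}$, both $3$-connected leaves. Choosing the leaf $M_2=AG(3,2)$ produces no contradiction from any of your inequalities (for instance, the $4$-element circuits of $AG(3,2)\delete p$ are flats avoiding $p$, so $\epsilon=0$ against the triangle on the other side), and $AG(3,2)$ is on no short list; the final case analysis as you describe it therefore cannot be completed. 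A secondary slip: leaves of the canonical tree need not be $3$-connected---they may be circuits or cocircuits on four or more elements---though these are quickly excluded since they create series or parallel classes of size at least three.

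The repair is the route the paper takes, and your tools suffice for it. First treat the case that $M$ has a parallel or series pair $\{p,p'\}$ directly (this absorbs your $U_{1,3}$ and $U_{2,3}$ leaves, and also series or parallel pairs sitting inside \emph{internal} circuit or cocircuit parts of the tree, which your leaf-first framing never sees): lifting circuits of $N=M\delete p/p'$ back to circuits of $M$ inside $C\cup\{p,p'\}$ shows that a circuit of $N$ of rank at most $r(N)-2$, or two circuits of $N$ with non-spanning union, would violate the criterion, so $N$ is sparse paving and (iii) holds. Then, assuming no parallel or series pairs, take \emph{any} $2$-sum decomposition $M=P(M_1,M_2)\delete p$ and use the same-side pairs of circuits: if $r(M_1),r(M_2)\geq 3$, each $M_i\delete p$ contains at most one circuit, and connectivity of the $M_i$ then forces $r^*(M)\leq 3$, contradicting $r^*(M)\geq 4$. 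Hence without loss of generality $r(M_1)=2$; your five-point-line observation gives $M_1\cong U_{2,4}$, and pairing the triangle $E(M_1)-p$ against circuits of $M_2$ through $p$ and avoiding $p$ yields that $M_2/p$ and $(M_2\delete p)^*=M_2^*/p$ are paving, which is (iv). The asymmetry---one side is a line---is a \emph{conclusion} of the same-side circuit count, which is exactly what your ``pin down the chosen $3$-connected leaf'' framing forfeits.
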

\begin{proof}
	It is straightforward to show that all matroids of type (i)-(iv) are $(2,2)$-uniform. To see that this list is complete, let $M$ be a connected $(2,2)$-uniform matroid that is not $3$-connected. If $M$ has rank or corank at most $3$, then it is easily seen to satisfy (i) or (ii). Thus, we may assume that $r(M),r^*(M)\geq 4$. Suppose now that, up to duality, $M$ has a parallel pair $\{p,p'\}$ and let $N=M\delete p/p'$. If there exists a circuit $C$ of $N$ of rank at most $r(N)-2$, then as $C$ or $C\cup p'$ is a circuit of $M$, it follows that $C\cup \{p,p'\}$ contains two circuits of $M$ whose union has rank at most $r(N)-1=r(M)-2$. Similarly, if there exists a pair of circuits $C_1,C_2$ of $N$ such that $r_N(C_1\cup C_2)\leq r(N)-1$, then $C_1\cup C_2\cup\{p,p'\}$ contains two circuits of $M$ whose union has rank at most $r(M)-2$. Both situations contradict the fact that $M$ is $(2,2)$-uniform. Hence, $N$ is sparse paving and (iii) holds. Otherwise, $M$ has no parallel or series pairs and we may assume that $M=P(M_1,M_2)\delete p$, for some connected matroids $M_1, M_2$ each having at least three elements and rank, corank at least two. If $r(M_1), r(M_2)\geq 3$, then by the $(2,2)$-uniform property, each of $M_1\delete p$ and $M_2\delete p$ contains at most one circuit. As each $M_i$ is connected, it follows that for $i\in \{1,2\}$, $M_i\delete p$ is a circuit and $r^*(M)\leq3$, a contradiction. Thus, without loss of generality, $r(M_1)= 2$ and $r(M)=r(M_2)+1$. If $|E(M_1)|\geq 5$, then $E(M_1)-p$ contains two triangles of $M$, and by the $(2,2)$-uniform property, $r(M)\leq 3$, a contradiction. Thus, $M_1\cong U_{2,4}$. Now let $T=E(M_1)-p$. By the $(2,2)$-uniform property, $r_M(C\cup T)\geq r(M)-1=r(M_2)$ for every circuit $C$ of $M_2$. It follows that every circuit of $M_2$ containing $p$ must have rank at least $r(M_2)-1$ and every circuit avoiding $p$ has rank at least $r(M_2)-2$. Thus, $M_2/p$ is paving. Also by the $(2,2)$-uniform property, every pair of circuits of $M_2\delete p$ must span. We conclude that $M_2/p$ and $M_2^*/p=(M_2\delete p)^*$ are paving and that (iv) holds.
\end{proof}
Restricting our attention to binary matroids, we may ignore case (iv) of Proposition~\ref{prop: conn 22-uniform} as such matroids have a $U_{2,4}$-minor. We then achieve the following list by combining Propositions~\ref{prop: discon 22-uniform} and~\ref{prop: conn 22-uniform} with Acketa's list~\cite{Acketa} of binary paving matroids. Note that, as $M(\cW_3), F_7, F_7^*$ and $AG(3,2)$ have transitive automorphism groups, any parallel connections of these matroids and $U_{2,3}$ are free of reference to a specific basepoint. The matroid $S_8$ is isomorphic to the unique non-tip deletion of the binary $4$-spike $Z_4$.
\begin{corollary}\label{cor: binary 22 list}
	The following matroids and their duals are all the binary $(2,2)$-uniform matroids that are not $3$-connected.
	\begin{enumerate}[label=\text{\emph{(\roman*)}},leftmargin=*,align=left]
		\item The matroids of rank at most $1$ other than $U_{0,1},U_{1,1},U_{1,2},U_{1,3}$;
		\item the non-simple rank-$2$ binary matroids with at most one loop;
		\item the looples, non-simple rank-$3$ binary matroids with every parallel class of size at most $2$; 
		\item $M_p\oplus U_{0,1}$ and $M_p\oplus U_{1,2}$, for $M_p$ in $\{M(\cW_3), F_7, F_7^*, AG(3,2)\}$;
		\item $P(Z_4,U_{2,3})\delete t$ and $P(S_8,U_{2,3})\delete t$, where $t$ is the tip of $Z_4$;
		\item $P(F_7,U_{2,3})\delete p$ and $P(AG(3,2),U_{2,3})\delete p$; and
		\item $P(M_p,U_{2,3})$ for $M_p$ in $\{M(\cW_3),F_7, F_7^*,AG(3,2)\}$.
	\end{enumerate}
\end{corollary}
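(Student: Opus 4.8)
The plan is to intersect the two structural dichotomies already established---Proposition~\ref{prop: discon 22-uniform} in the disconnected case and Proposition~\ref{prop: conn 22-uniform} in the connected but not $3$-connected case---with the class of binary matroids, resolving every occurrence of ``paving'' and ``sparse paving'' by means of Acketa's list~\cite{Acketa}. The facts I would quote from that list are that the only connected binary paving matroids of rank and corank at least three are $M(\cW_3),F_7,F_7^*$ and $AG(3,2)$; that each of these is in fact sparse paving, is $3$-connected, and has a transitive automorphism group; that the class $\{M(\cW_3),F_7,F_7^*,AG(3,2)\}$ is closed under duality; and that every remaining binary paving matroid has rank or corank at most two.

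The first reductions are quick. Over $GF(2)$ the matroid $U_{2,4}$ is excluded, so case~(iv) of Proposition~\ref{prop: conn 22-uniform} contributes nothing, every matroid $P(N,U_{2,4})\delete p$ having a $U_{2,4}$-minor. Since the four distinguished matroids are $3$-connected, the paving cases---Proposition~\ref{prop: discon 22-uniform}(i) and Proposition~\ref{prop: conn 22-uniform}(i),(ii)---can only produce examples of small rank or corank; these, read together with their duals, give items~(i),(ii) and~(iii), the bound of one loop in~(ii) and the looplessness in~(iii) being forced by the $(2,2)$-uniform property, as two loops alongside any $U_{2,2}$-restriction would yield a $U_{2,2}\oplus U_{0,2}$-minor. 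The direct-sum cases Proposition~\ref{prop: discon 22-uniform}(ii),(iii) then give item~(iv): the (sparse) paving summand $M_p$ must be one of the four, and duality sends $M_p\oplus U_{0,1}$ to $M_p^*\oplus U_{1,1}$, which is why the list is phrased up to duals. The organising observation is that items~(i)--(iii), together with their duals, cover every example whose rank or corank is at most three, so the remaining connected examples have both rank and corank at least four.

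Those remaining matroids are precisely the binary members of Proposition~\ref{prop: conn 22-uniform}(iii), and pinning them down is the step I expect to be the main obstacle. Here $M$ has, up to duality, a series pair $\{p,p'\}$ with $N=M\delete p/p'$ a binary sparse paving matroid; since deleting and contracting drop the corank and rank each by one, $N$ has rank and corank at least three and so, by the list above, is one of the four distinguished matroids. The task is then to enumerate, for each such core $N$, the inequivalent ways of reattaching a series pair so that the result stays binary with rank and corank at least four. The straightforward attachment, parallel-connecting a copy of $U_{2,3}$ at an element of $N$, yields the matroids $P(M_p,U_{2,3})$ of item~(vii); but a given core typically admits several further attachments, and these produce items~(v) and~(vi). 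The source of the extra attachments is the ambient structure of the cores: because $AG(3,2)$ is the tipless binary $4$-spike $Z_4\delete t$ and $S_8=Z_4\delete e$, one may reattach using the spike's tip, giving $P(Z_4,U_{2,3})\delete t$ and $P(S_8,U_{2,3})\delete t$; and because $M(\cW_3)\cong F_7\delete p$ while $F_7^*\cong AG(3,2)\delete p$, the attachments realising these deletions give $P(F_7,U_{2,3})\delete p$ and $P(AG(3,2),U_{2,3})\delete p$. The genuinely delicate point is to show that these exhaust the possibilities and that the resulting list contains no repetitions once isomorphism and duality are accounted for; the transitivity of the automorphism groups of the four cores, noted before the statement, is what allows the choice of basepoint to be suppressed throughout.
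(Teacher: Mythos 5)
Your proposal is correct and follows essentially the same route as the paper: discard case~(iv) of Proposition~\ref{prop: conn 22-uniform} via the $U_{2,4}$-minor, then intersect Propositions~\ref{prop: discon 22-uniform} and~\ref{prop: conn 22-uniform} with Acketa's list of binary paving matroids, using the transitive automorphism groups of $M(\cW_3),F_7,F_7^*,AG(3,2)$ to suppress basepoints. Your identification of the ``extra'' attachments in case~(iii) through $AG(3,2)=Z_4\delete t$, $S_8\delete t\cong F_7^*$, $M(\cW_3)\cong F_7\delete p$ and $F_7^*\cong AG(3,2)\delete p$ is exactly the enumeration the paper leaves implicit, and is accurate.
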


\section{The \texorpdfstring{$3$}{3}-connected binary \texorpdfstring{$(2,2)$}{(2,2)}-uniform matroids}\label{sec: 3conn}
In this section we prove Theorem~\ref{thm: 3-connected binary (1,2)}, and in doing so, complete the determination of the binary $(2,2)$-uniform matroids. We also remark that two of the important matroids of this section, $P_9$ and $L_{10}$, arise as graft matroids. A \emph{graft} \cite{Seymour} is a pair $(G,\gamma)$ where $G$ is a graph and $\gamma$ is a subset of $V(G)$ thought of as the \emph{coloured} vertices. The associated \emph{graft matroid} is the vector matroid of the matrix obtained by adjoining the incidence vector of the set $\gamma$ to the vertex-edge incidence matrix of $G$. We follow \cite{Oxley 1987} in using $P_9$ to denote the simple binary extension of $M(\cW_4)$ represented by the matrix of Figure~\ref{fig: P9}. This is isomorphic to the graft of $\cW_4$ in which the hub vertex and three of the four rim vertices are coloured. By considering the representation of the matroid $P_{10}$ given in Figure~\ref{fig: P10}, we see that $P_{10}$ arises as a single-element coextension of $P_9$. In fact, it is routine (if tedious) to verify that $P_{10}$ is the $3$-sum of $P_9$ and $F_7$ across any of the four triangles of $P_9$ other than $\{1,4,8\}$ and $\{3,4,7\}$. Up to isomorphism, there are two other simple binary extensions of $M(\cW_4)$, namely $M(K_5\delete e)$ and $M^*(K_{3,3})$.

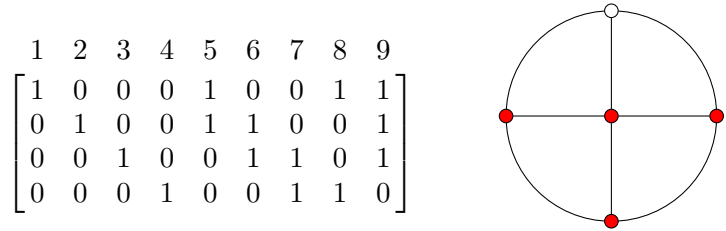
\begin{figure}[ht]
	\centering
	\begin{minipage}{0.6\textwidth}
		\centering
		\[
		\bordersquare{&1&2&3&4&5&6&7&8&9\cr
			&1&0&0&0&1&0&0&1&1\cr
			&0&1&0&0&1&1&0&0&1\cr
			&0&0&1&0&0&1&1&0&1\cr
			&0&0&0&1&0&0&1&1&0\cr
		}
		\]
	\end{minipage}
	\begin{minipage}{0.3\textwidth}
		\begin{tikzpicture}[scale = 1]
		\def\N{4}
		\def \angle{-360/\N}
		\draw (0,0) circle(1.4);
		
		\begin{scope}[every node/.style={shape=circle,scale=0.5,fill=white,draw}]
		\node[fill=red] (c) at (0,0) {};
		\foreach \x in {1,...,3}{
			\node[fill=red] (\x) at ($(0,0)!1!\angle*(\x-1):(1.4,0)$) {};
			\draw (c) to (\x);
		}
		\node (4) at ($(0,0)!1!\angle*(3):(1.4,0)$) {};
		\draw (c) to (4);
		\end{scope}
		\end{tikzpicture}
	\end{minipage}
	\caption{A binary representation of $P_9$ and $P_9$ as a graft of $\cW_4$.}
	\label{fig: P9}
\end{figure}

In proving Theorem~\ref{thm: 3-connected binary (1,2)}, we will require the following characterisation of binary matroids with no $M(\cW_4)$-minor due to Oxley~\cite[Theorem 2.1]{Oxley 1987}. Here $Z_r$ is the rank-$r$ binary spike with tip $t$ and $y$ is some non-tip element of $Z_r$.
\begin{lemma}\label{lem: no mW4 minor}
	Let $M$ be a binary matroid. Then $M$ is $3$-connected and has no $M(\cW_4)$ minor if and only if
	\begin{enumerate}[label=\text{\emph{(\roman*)}},leftmargin=*,align=left]
		\item $M\cong Z_r,Z_r^*,Z_r\delete y,$ or $Z_r\delete t$ for some $r\geq 3$; or
		\item $M\cong U_{0,0},U_{0,1},U_{1,1},U_{1,2},U_{1,3},$ or $U_{2,3}$.
	\end{enumerate}
\end{lemma}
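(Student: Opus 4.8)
The plan is to prove both implications, exploiting the fact that both the class (3-connected binary matroids with no $M(\cW_4)$-minor) and the proposed list are closed under duality: the spikes pair off as $Z_r,Z_r^*$ and $Z_r\delete y,(Z_r\delete y)^*$, among the uniform matroids $U_{0,1}\leftrightarrow U_{1,1}$ and $U_{1,3}\leftrightarrow U_{2,3}$ while the rest are self-dual, and $M(\cW_4)$ is itself self-dual since the wheel is a self-dual planar graph. For the easy direction, each uniform matroid in (ii) has fewer than eight elements or rank at most two, so it cannot contain the rank-$4$, eight-element matroid $M(\cW_4)$. That the spikes and their relatives in (i) are $3$-connected is standard, and that they have no $M(\cW_4)$-minor I would establish by induction on $r$: every single-element deletion or contraction of $Z_r$ (respectively $Z_r\delete t$ and $Z_r\delete y$) is again a spike-relative of rank or corank one smaller, which reduces the verification to a direct check in the base cases $r=3,4$.

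For the forward direction, let $M$ be $3$-connected and binary with no $M(\cW_4)$-minor, and induct on $|E(M)|$, the small cases being immediate. Suppose first that $M$ has no $M(\cW_3)$-minor, that is, no $M(K_4)$-minor. Since $M$ is binary it has no $U_{2,4}$-minor either; as every $3$-connected matroid with at least four elements has a $U_{2,4}$- or $M(K_4)$-minor, it follows that $|E(M)|\leq 3$, whence $M$ is one of the matroids listed in (ii). Thus I may assume that $M$ has an $M(\cW_3)$-minor.

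Now apply Tutte's Wheels-and-Whirls Theorem. If $M$ were a whirl it would have a $U_{2,4}$-minor, contradicting that $M$ is binary. If $M$ is a wheel then, since $M(\cW_4)$ is a minor of $M(\cW_r)$ for every $r\geq 4$, the hypothesis forces $M\cong M(\cW_3)\cong Z_3\delete y$, which appears in (i). Otherwise there is an element $e$ such that $M\delete e$ or $M/e$ is $3$-connected; by the self-duality noted above I may assume $M\delete e$ is $3$-connected. As $M\delete e$ is binary with no $M(\cW_4)$-minor, it lies in the list by induction, so $M$ is a $3$-connected binary single-element extension of a list member having no $M(\cW_4)$-minor. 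It then remains to check that every such extension is again in the list.

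The extensions of the small uniform matroids and of $M(K_4)=Z_3\delete y$ are readily enumerated: $M(K_4)$ has, up to isomorphism, only the simple binary extension $F_7=Z_3$, and the uniform matroids of (ii) contribute nothing new. Hence the substance of the argument, and its main obstacle, is the analysis of the single-element extensions (equivalently, by duality, coextensions) of the spikes $Z_r$, $Z_r\delete t$, $Z_r\delete y$, and $Z_r^*$ for $r\geq 3$. Working from an explicit binary representation with tip $t$ and legs $\{x_i,y_i\}$, one determines the possible columns a new element may contribute and shows that each admissible column either preserves the spike structure --- restoring a deleted tip or leg element, or raising the rank to the spike of the next size --- and so yields another matroid of type (i), or else creates an $M(\cW_4)$-minor. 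In particular one must verify that the full spike $Z_r$ admits no proper $3$-connected binary extension avoiding $M(\cW_4)$, so that the family closes up. This casework, governed by how a new element can meet the legs and the tip, is where essentially all of the difficulty resides.
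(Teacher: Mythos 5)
The first thing to note is that the paper does not prove this lemma at all: it is quoted as a known result, Oxley's characterisation of the binary matroids with no $4$-wheel minor \cite[Theorem 2.1]{Oxley 1987}, and that characterisation is the subject of a substantial standalone paper. So your attempt is being measured against Oxley's original proof, not against anything in this paper. Your outer scaffolding is sound and is indeed the natural skeleton for such a result: the class and the list are closed under duality (though you should check, rather than assert, that $(Z_r\delete y)^*$ and $(Z_r\delete t)^*$ reappear in the list --- the tipless spike $Z_r\delete t$ is self-dual, and $Z_r\delete y$ is self-dual as well, e.g.\ $Z_4\delete y\cong S_8$); the reduction of the no-$M(\cW_3)$-minor case via the theorem that every $3$-connected matroid with at least four elements has a $U_{2,4}$- or $M(\cW_3)$-minor is correct; and disposing of wheels and whirls before invoking Tutte's theorem is the right move.

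The genuine gap is that the heart of the theorem is asserted rather than proved. Your inductive step reduces everything to the claim that every $3$-connected binary single-element extension of $Z_r$, $Z_r\delete t$, $Z_r\delete y$, or $Z_r^*$ with no $M(\cW_4)$-minor is again in the list --- in particular, that $Z_r$ admits no such proper extension at all, and (via the $Z_r^*$ case) the corresponding statement for coextensions of spikes. You say ``one determines the possible columns\ldots and shows that each admissible column either preserves the spike structure\ldots or else creates an $M(\cW_4)$-minor,'' but no argument is given, and this is not a finite check: it is a claim about an infinite family, requiring one to exhibit a $4$-wheel minor in every bad extension for every $r\geq 4$, and the coextension analysis is a separate and harder piece of casework. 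This verification \emph{is} Oxley's theorem; as written, your proposal is a correct outline with the entire content deferred. There is also a small slip in your easy direction: it is false that every single-element deletion or contraction of $Z_r$ is again a spike-relative --- for instance $Z_r/t$ is $U_{r-1,r}$ with every element doubled, and $Z_r/x_i$ has $t$ parallel to $y_i$ --- so your induction must pass to simplifications and cosimplifications, which is harmless (and should be said) because $M(\cW_4)$ is $3$-connected and hence survives in the simplification.
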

The flats of the rank-$r$ binary spike are very well behaved and the straightforward proof of the following is omitted.
\begin{lemma}\label{lemma: spikes}
	For $r\geq 3$, $Z_r$ and $Z_r\delete y$ are $(2,2)$-uniform if and only if $r\leq 4$ and $Z_r\delete t$ is $(2,2)$-uniform if and only if $r\leq 5$.
\end{lemma}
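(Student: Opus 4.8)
The plan is to use the nullity-of-flats characterisation of $(2,2)$-uniformity from Proposition~\ref{prop: nullity def}, which says that a matroid $N$ is $(2,2)$-uniform if and only if every rank-$(r(N)-2)$ flat of $N$ has nullity at most $1$. Since the spikes $Z_r$, $Z_r\delete y$, and $Z_r\delete t$ all have a very rigid, explicitly describable flat structure, the strategy is to directly compute the nullities of their corank-$2$ flats and determine exactly when one of nullity $2$ or more appears.

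Recall that the rank-$r$ binary spike $Z_r$ with tip $t$ has a ground set consisting of the tip $t$ together with $r$ legs, each leg being a pair $\{x_i,y_i\}$ with $\{t,x_i,y_i\}$ a triangle; its circuits and flats are governed by the familiar spike structure. First I would identify the large flats of $Z_r$: a rank-$(r-2)$ flat is obtained by taking a rank-$(r-2)$ subspace of the spike, and the key point is that for $r$ large one can find such a flat containing many full legs, i.e. many parallel-like pairs spanning a small rank. Concretely, restricting to several legs produces a flat whose nullity grows with the number of legs included, while its rank stays correspondingly low. The computation reduces to checking, for each of the three spike families, the smallest $r$ at which some corank-$2$ flat picks up two independent circuits (equivalently, nullity $\geq 2$). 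Using the fact noted at the end of Section~\ref{sec: finiteness} — that $M$ is $(2,2)$-uniform iff the union of any two circuits has rank at least $r(M)-1$ — gives an equivalent and often cleaner criterion: I would exhibit, at the threshold rank, two circuits whose union spans only $r-2$, and verify that below the threshold no such pair exists.

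The heart of the argument is locating the critical rank for each family. For $Z_r$ and $Z_r\delete y$, pairs of ``short'' circuits (such as the $4$-element circuits formed from two legs, or circuits built from triples of legs) can be combined; once $r\geq 5$ one finds two circuits spanning only $r-2$, whereas for $r\leq 4$ every pair of circuits still spans at least $r-1$. Deleting the tip in $Z_r\delete t$ removes the circuits through $t$ and thereby raises the threshold by one, so the bound becomes $r\leq 5$ rather than $r\leq 4$. In each case the ``if'' direction (that the stated small spikes are $(2,2)$-uniform) is a finite check over the finitely many circuit-pairs up to the transitive symmetry of the spike, and the ``only if'' direction is the explicit exhibition of a bad circuit-pair at the next rank up, which then persists in all higher ranks since those smaller spikes are minors.

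I expect the main obstacle to be bookkeeping rather than conceptual: one must be careful about which circuits survive tip-deletion and non-tip-deletion, and must track ranks of unions of legs correctly so as not to miscount the threshold by one. The asymmetry introduced by deleting $t$ versus deleting a non-tip element $y$ is exactly what produces the two different bounds ($r\leq 4$ versus $r\leq 5$), so the delicate step is verifying that removing the tip genuinely eliminates the offending circuit-pair that forces $Z_4$ (and $Z_4\delete y$) to fail, thereby allowing $Z_5\delete t$ to remain $(2,2)$-uniform. Exploiting the transitive automorphism group of the spike on its legs keeps the number of cases small and makes each check routine.
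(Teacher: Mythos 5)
The paper itself omits the proof of this lemma, dismissing it as a straightforward verification of the flat structure of spikes, and your plan --- compute the corank-$2$ flats of $Z_r$, $Z_r\delete y$, $Z_r\delete t$ via Proposition~\ref{prop: nullity def}, or equivalently test pairs of circuits against the rank-$(r(M)-1)$ criterion --- is exactly that intended routine verification, so in approach you are aligned with the paper. Your skeleton is sound: the flats spanned by collections of legs are indeed the extremal ones, and the finite checks at the threshold ranks, pruned by leg-transitivity, are the right way to handle the ``if'' directions.

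However, your final paragraph contains an off-by-one misstatement that flatly contradicts the lemma you are proving: you refer to ``the offending circuit-pair that forces $Z_4$ (and $Z_4\delete y$) to fail,'' but $Z_4$ and $Z_4\delete y$ are $(2,2)$-uniform; the first failures occur at $r=5$, and it is the pair forcing $Z_5$ to fail that tip-deletion must destroy. Concretely, writing $x_i=e_i$, $y_i=e_i+\mathbf{1}$, $t=\mathbf{1}$ for the standard binary representation of $Z_r$: the closure of any $r-3$ legs is the flat consisting of those legs together with $t$, of rank $r-2$ and nullity $r-3$, which is at least $2$ exactly when $r\geq 5$; equivalently, two tip-triangles $\{t,x_1,y_1\}$ and $\{t,x_2,y_2\}$ have union of rank $3=r-2$ in $Z_5$. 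Every violating pair at $r=5$ uses the tip (two triangles, or a triangle with the $4$-circuit on the same two legs), so deleting $t$ removes them all; in $Z_r\delete t$ the closure of $r-3$ legs has nullity only $r-4$, and the worst circuit-pairs are two $4$-circuits sharing a leg, whose union $\{x_1,y_1,x_2,y_2,x_3,y_3\}$ has rank $4$, violating the criterion only once $r\geq 6$. Since the earlier part of your proposal states the thresholds correctly (``once $r\geq 5$\dots''), this reads as a local slip rather than a structural error, but in a finished proof it must be corrected, and the explicit witnesses above --- which your plan gestures at without ever exhibiting --- need to be written out, together with the finite verification that $Z_4$, $Z_4\delete y$, and $Z_5\delete t$ have no corank-$2$ flat of nullity $2$.
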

Now consider the rank-$5$ binary affine geometry $AG(4,2)$. As its rank-$3$ flats are all isomorphic to $U_{3,4}$, this matroid is certainly $(2,2)$-uniform. Viewing $AG(4,2)$ as the deletion of a hyperplane $H$ from the projective geometry $PG(4,2)$, we see that every element of $H$ is in a triangle with two elements of $AG(4,2)$. It follows that any rank-$5$ binary extension of $AG(4,2)$ must have a rank-$3$ flat of nullity at least $2$ and hence fail to be $(2,2)$-uniform. Furthermore, by Lemma~\ref{lem: r,r* bound}, $AG(4,2)$ has no binary $(2,2)$-uniform coextensions. Thus, $AG(4,2)$ is a maximal binary $(2,2)$-uniform matroid. The next lemma concerning binary affine matroids will be used in the proof of Theorem~\ref{thm: 3-connected binary (1,2)}.

\begin{lemma}\label{lem: MK33 exts}
	Let $M$ be a simple rank-$5$ binary extension of $M(K_{3,3})$. Then $M$ is $(2,2)$-uniform if and only if $M$ is affine.
\end{lemma}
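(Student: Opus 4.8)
The plan is to work inside the standard binary representation of $M(K_{3,3})$ and to decide, point by point, which extension points preserve the $(2,2)$-uniform property. I would fix the vertex classes $\{u_1,u_2,u_3\}$ and $\{v_1,v_2,v_3\}$ and represent $M(K_{3,3})$ by the columns $e_{u_i}+e_{v_j}$ of the vertex--edge incidence matrix over $GF(2)$. The column space is then the even-weight subspace of $GF(2)^6$, so the $31$ points of the ambient $PG(4,2)$ correspond to the nonzero even subsets $T\subseteq V(K_{3,3})$, the nine edges being the subsets $\{u_i,v_j\}$. Because these nine edge-points span the whole space, at most one linear functional sends them all to $1$; one exists since $K_{3,3}$ is bipartite, namely $T\mapsto |T\cap\{v_1,v_2,v_3\}|\bmod 2$, and its kernel is the unique hyperplane disjoint from $M(K_{3,3})$. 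Hence $M$ is affine if and only if $M$ is a restriction of the single copy of $AG(4,2)$ containing $M(K_{3,3})$, which is the set of $T$ with $|T\cap\{v_1,v_2,v_3\}|$ odd.

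The forward implication is then immediate. If $M$ is affine it is a restriction of $AG(4,2)$, and since $AG(4,2)$ is $(2,2)$-uniform and $(2,2)$-uniformity is closed under taking minors (being defined by the absence of a fixed minor), $M$ is $(2,2)$-uniform.

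For the converse I would argue contrapositively. A non-affine extension point corresponds to an even subset $T$ with $|T\cap\{v_1,v_2,v_3\}|$ even, and hence, as $|T|$ is even, with $|T\cap\{u_1,u_2,u_3\}|$ also even; since each part has only three vertices, the only possibilities are the same-part pairs $\{u_i,u_j\}$ or $\{v_i,v_j\}$ and the balanced quadruples $\{u_i,u_j,v_k,v_l\}$, which are exactly the fifteen points off $AG(4,2)$. Suppose $M$ is not affine; then it contains such a point $p$, and as $(2,2)$-uniformity passes to restrictions it suffices to show that the rank-$5$ restriction $M(K_{3,3})\cup p$ is not $(2,2)$-uniform. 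The crucial point is that each such $p$ lies in at least two triangles: a pair $\{u_i,u_j\}$ equals $(e_{u_i}+e_{v_k})+(e_{u_j}+e_{v_k})$ for every $k$, giving three triangles (and the $v$-pairs are symmetric), while a quadruple $\{u_i,u_j,v_k,v_l\}$ equals both $(e_{u_i}+e_{v_k})+(e_{u_j}+e_{v_l})$ and $(e_{u_i}+e_{v_l})+(e_{u_j}+e_{v_k})$, giving two. For any two triangles $\{p,a,b\}$ and $\{p,c,d\}$ through $p$ we have $a+b=p=c+d$, whence $a+b+c+d=0$ and $r(\{p,a,b,c,d\})\le 3$. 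This exhibits a pair of circuits whose union has rank at most $3=5-2$, so by the criterion that a matroid is $(2,2)$-uniform exactly when every pair of circuits has union of rank at least one less than the whole, $M(K_{3,3})\cup p$, and therefore $M$, is not $(2,2)$-uniform.

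I expect the main obstacle to be purely organisational: correctly identifying the fifteen non-affine points and checking that each supports two triangles of $M(K_{3,3})\cup p$. Once this small case analysis is in hand, both the minor-closure argument for the ``if'' direction and the rank computation witnessing non-uniformity for the ``only if'' direction are routine.
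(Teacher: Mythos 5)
Your proposal is correct and follows essentially the same route as the paper: the ``if'' direction is the identical observation that an affine extension is a restriction of $AG(4,2)$, and the ``only if'' direction classifies the non-affine extension points (your even-incidence-vector bookkeeping is just a change of coordinates from the paper's all-ones-row matrix, where non-affine means the column ends in $0$) and shows each such point yields a rank-$3$ set of nullity $2$. The only cosmetic difference is the witness for failure: you take two triangles through $p$ and apply the circuit-pair criterion, while the paper combines one triangle through $p$ with a $4$-circuit of $M(K_{3,3})$ to exhibit a rank-$3$ flat of nullity at least $2$ --- the same violation of Proposition~\ref{prop: nullity def}.
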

\begin{proof}
	If $M$ is a simple rank-$5$ binary affine matroid, then it is a restriction of $AG(4,2)$ and thus is $(2,2)$-uniform. For the other direction, let $M$ be a simple rank-$5$ binary extension of $M(K_{3,3})$ that is $(2,2)$-uniform. By uniqueness of binary representation, $M$ may be represented by a binary matrix whose first nine columns are the representation of $M(K_{3,3})$ given in Figure~\ref{fig:MK33}. Let $e$ label an extension column. It is easily seen that if the last entry of column $e$ is zero, then $e$ is in a triangle with two elements of $M(K_{3,3})$. But every pair of elements of $M(K_{3,3})$ are in a circuit of size four. Thus, if column $e$ ends in zero, then $e$ is in a rank-$3$ flat of $M$ of nullity at least $2$. This is a contradiction to the $(2,2)$-uniform property. We conclude that every extension column ends in $1$ and that, consequently, $M$ is affine.
\end{proof}

\begin{figure}[ht]
	\begin{minipage}{\textwidth}
		\centering
	\begin{minipage}{0.5\textwidth}
		\centering
	\[
	\bordersquare{&1&2&3&4&5&6&7&8&9\cr
		&1&0&0&0&0&1&0&0&1\cr
		&0&1&0&0&0&1&1&0&0\cr
		&0&0&1&0&0&0&1&1&0\cr
		&0&0&0&1&0&0&0&1&1\cr
		&1&1&1&1&1&1&1&1&1\cr
	}
	\]
	\end{minipage}
	\hspace{0.3cm}
		\begin{minipage}{0.4\textwidth}
		\begin{tikzpicture}[scale=1.1]
		\begin{scope}[every node/.style={shape=circle,scale=0.5,fill=white,draw}]
		\node (1) at (-1.5,1) {};
		\node (2) at (0,1) {};
		\node (3) at (1.5,1) {};
		\node (1') at (-1.5,-1.2) {};
		\node (2') at (0,-1.2) {};
		\node (3') at (1.5,-1.2) {};
		\draw (1) to (1') to (2) to (2') to (3) to (3') to (1) to (2');
		\draw (1') to (3);
		\draw (2) to (3');
		\end{scope}
		\end{tikzpicture}
		\end{minipage}
	\end{minipage}
	\caption{Binary and graphic representations for $M(K_{3,3})$.}
	\label{fig:MK33}
\end{figure}
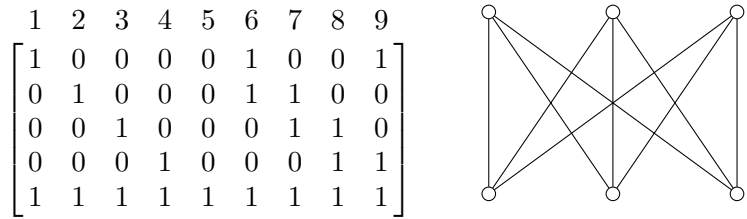

Two of the four non-isomorphic simple rank-$5$ binary single-element extensions of $M(K_{3,3})$ are affine. These are the well-known regular matroid $R_{10}$ and a matroid that we name $L_{10}$, a representation for which is given in Figure~\ref{fig:L10}. In \cite{Seymour}, $R_{10}$ is identified as the graft matroid of $K_{3,3}$ in which every vertex is coloured. We remark here that $L_{10}$ is the graft matroid of $K_{3,3}$ in which all but two vertices, both in the same partition, are coloured. 
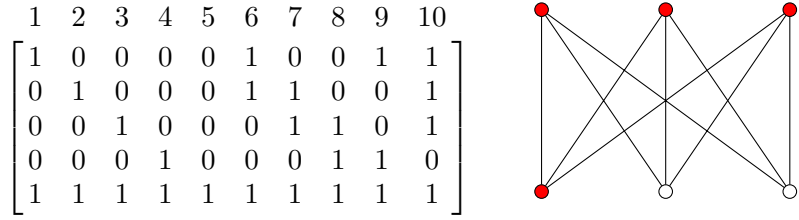
\begin{figure}[ht]
	\begin{minipage}{\textwidth}
		\centering
		\begin{minipage}{0.55\textwidth}
			\centering
			\[
			\bordersquare{&1&2&3&4&5&6&7&8&9&10\cr
				&1&0&0&0&0&1&0&0&1&1\cr
				&0&1&0&0&0&1&1&0&0&1\cr
				&0&0&1&0&0&0&1&1&0&1\cr
				&0&0&0&1&0&0&0&1&1&0\cr
				&1&1&1&1&1&1&1&1&1&1\cr
			}
			\]
		\end{minipage}
		\hspace{0.3cm}
		\begin{minipage}{0.4\textwidth}
			\begin{tikzpicture}[scale=1.1]
			\begin{scope}[every node/.style={shape=circle,scale=0.5,fill=white,draw}]
			\node[fill=red] (1) at (-1.5,1) {};
			\node[fill=red] (2) at (0,1) {};
			\node[fill=red] (3) at (1.5,1) {};
			\node[fill=red] (1') at (-1.5,-1.2) {};
			\node (2') at (0,-1.2) {};
			\node (3') at (1.5,-1.2) {};
			\draw (1) to (1') to (2) to (2') to (3) to (3') to (1) to (2');
			\draw (1') to (3);
			\draw (2) to (3');
			\end{scope}
			\end{tikzpicture}
		\end{minipage}
	\end{minipage}
	\caption{A binary representation of $L_{10}$ and $L_{10}$ as a graft of $K_{3,3}$.}
	\label{fig:L10}
\end{figure}

In our final step before proving Theorem~\ref{thm: 3-connected binary (1,2)}, we determine the binary $(2,2)$-uniform coextensions of $M(K_5\delete e)$ and $P_9$; geometric representations of which are given in Figure~\ref{fig: P9 and MK5de}.

\begin{lemma}\label{lem: L10}
	The sets of non-isomorphic binary $(2,2)$-uniform coextensions of $M(K_5\delete e)$ and $P_9$, respectively, are $\{L_{10}\}$ and $\{P_{10},L_{10}\}$.
\end{lemma}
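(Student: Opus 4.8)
The plan is to prove Lemma~\ref{lem: L10} by exploiting the fact that both $M(K_5\delete e)$ and $P_9$ are rank-$4$ simple binary extensions of $M(\cW_4)$, together with the uniqueness of binary representations and the strong rank constraints already established. First I would set up coordinates: fix a binary representation of each matroid (for instance, the matrix of Figure~\ref{fig: P9} for $P_9$) and observe that any single-element coextension is obtained by adjoining a new row to the representing matrix, that is, by prepending a new coordinate to each column and choosing an entry in the new row for each existing element, plus adding one new element whose column has a $1$ in the new row. By the uniqueness of binary representation, every coextension arises this way up to row operations, so the enumeration of coextensions reduces to a finite check over the possible new rows modulo the symmetries (the automorphism group acting on columns) of the base matroid.

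The key filter is Lemma~\ref{lem: r,r* bound}: since these coextensions have rank $5$, for the result to be $(2,2)$-uniform we need $\min\{r,r^*\}\le 5$, and more usefully I would invoke Proposition~\ref{prop: nullity def} in its circuit form stated at the end of Section~\ref{sec: finiteness}, namely that a matroid is $(2,2)$-uniform iff the union of any two circuits has rank at least $r(M)-1$. Concretely, a rank-$5$ coextension $M$ fails to be $(2,2)$-uniform exactly when it has a rank-$3$ flat of nullity at least $2$. So the strategy is: for each candidate coextension, search for a rank-$3$ flat carrying two independent circuits (equivalently, at least five elements). Because $M(\cW_4)$ is a highly symmetric codimension-$1$ substructure, I expect most choices of new row to create such a flat, and only a handful will survive.

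The cleanest route is probably to push the analysis through $M(K_{3,3})$-type arguments already developed: I would identify which of the surviving coextensions are affine and apply Lemma~\ref{lem: MK33 exts} and the discussion preceding it (that any rank-$5$ binary extension of $AG(4,2)$ fails $(2,2)$-uniformity, and that $AG(4,2)$ itself is maximal) to pin down the affine survivors as exactly the restrictions of $AG(4,2)$. For $M(K_5\delete e)$, this should isolate $L_{10}$ as the unique $(2,2)$-uniform coextension; for $P_9$, one obtains $L_{10}$ together with $P_{10}$, using the facts already recorded that $P_{10}$ is a single-element coextension of $P_9$ and that $P_{10}$ is $(2,2)$-uniform (its rank-$3$ flats being well controlled, as can be checked from Figure~\ref{fig: P10} together with $P_{10}/5\delete 10\cong M(\cW_4)$). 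I would verify $(2,2)$-uniformity of $L_{10}$ directly from its representation in Figure~\ref{fig:L10}, or simply note that $L_{10}$ is affine and hence a restriction of $AG(4,2)$.

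The main obstacle I anticipate is organising the case analysis so that it is genuinely exhaustive rather than merely plausible: there are many candidate rows, and one must account for the automorphisms of $M(\cW_4)$ (and of $P_9$ and $M(K_5\delete e)$) to reduce the list to inequivalent coextensions, while simultaneously certifying that each rejected candidate really does produce a rank-$3$ flat of nullity $\ge 2$. I would manage this by working with the triangles of the base matroid: each new element $e$ with a $0$ in the new row lands in a triangle with two old elements, and since the base has many four-element circuits, such placements tend to force a large rank-$3$ flat. The delicate part is the remaining configurations where the new row is supported so as to avoid creating any low-rank flat; here I would argue that only the affine completions (forcing the all-ones behaviour exactly as in the proof of Lemma~\ref{lem: MK33 exts}) survive, thereby tying the enumeration back to $AG(4,2)$ and yielding precisely the claimed lists $\{L_{10}\}$ and $\{P_{10},L_{10}\}$.
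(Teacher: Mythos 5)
Your setup is the same as the paper's (coordinatise via uniqueness of binary representation, so that a single-element coextension is determined by a new row whose entries on a basis may be normalised to zero, leaving a handful of unknown entries), but the proposal leaves the decisive step unexecuted, and the tools you nominate to close it do not work. First, Lemma~\ref{lem: MK33 exts} classifies \emph{extensions of $M(K_{3,3})$}; the matroids at issue here are coextensions of $M(K_5\delete e)$ and $P_9$, which are not known a priori to contain $M(K_{3,3})$ as a restriction, so that lemma cannot ``pin down the survivors.'' Worse, the guiding dichotomy you propose --- that only affine completions survive --- is false in the $P_9$ case: $P_{10}$ is not affine (its columns $1,4,9$ in Figure~\ref{fig: P10} form a triangle, an odd circuit), yet it must appear in the list. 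You acknowledge $P_{10}$ separately, but then your enumeration has no mechanism for certifying that no \emph{other} non-affine candidate survives, and that certification is precisely the content of the lemma. Finally, your triangle heuristic (``each new element $e$ with a $0$ in the new row lands in a triangle with two old elements'') is transplanted from the extension argument of Lemma~\ref{lem: MK33 exts} and does not apply to coextensions: there is only one new element $x$, its entry in the new row is $1$, and the effect of the remaining row entries is to determine which circuits of the base matroid lift to circuits of the coextension, not to place new elements in triangles.

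For comparison, the paper closes exactly this gap deterministically rather than by search. It takes the six-element rank-$3$ hyperplane $H=\{e_1,e_2,e_3,e_5,e_6,e_9\}$ common to both base matroids; $(2,2)$-uniformity makes $H\cup X$ a nullity-$3$ hyperplane of $M$, so $M|(H\cup X)$ is $(1,2)$-uniform, whence by duality and Lemma~\ref{lem: simple} its dual is a rank-$3$ \emph{simple} binary matroid on $|X|+6$ elements --- forcing $|X|=1$ (since $PG(2,2)$ has only seven points) and, reading off the explicit representation, forcing $\beta_5=\beta_6=1$ and $\beta_9=1-\alpha$. A second hyperplane $H'=\cl_M(\{e_1,e_3,e_4\})$ then determines $\beta_7,\beta_8$: in the $M(K_5\delete e)$ case the same simplicity argument gives $\beta_7=\beta_8=1$ (so $M\cong L_{10}$), while in the $P_9$ case the requirement that the rank-$4$ restriction $M|(H'\cup x)$ be $(1,2)$-uniform leaves exactly the options $M\cong L_{10}$ or $M\cong P_{10}$. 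If you want to salvage your enumeration-based route, you would need to actually run the finite check over the $2^5$ normalised rows (modulo automorphisms of the base) and exhibit a rank-$3$ flat of nullity at least $2$ for every rejected row; as written, that step is only asserted.
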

\begin{proof}
	Let $M$ be a binary $(2,2)$-uniform matroid with a subset $X\subseteq E(M)$ such that $M/X\cong N$ for $N$ in $\{M(K_5\delete e),P_9\}$. By uniqueness of binary representation, we may assume that $M/X$ is represented by the binary matrix $A$ given in Figure~\ref{fig: matrix A}, where $\alpha \in \{0,1\}$ depends on $N$.	
	
	\begin{figure}[ht]
		\centering
		\begin{minipage}{\textwidth}
			\[
			\bordersquare{&e_1&e_2&e_3&e_4&e_5&e_6&e_7&e_8&e_9\cr
				&1&0&0&0&1&0&0&1&1\cr
				&0&1&0&0&1&1&0&0&\alpha\cr
				&0&0&1&0&0&1&1&0&1\cr
				&0&0&0&1&0&0&1&1&0\cr
			}
			\]
		\end{minipage}
		\caption{Matrix $A$. $M[A]$ is isomorphic to $M(K_5\delete e)$ when $\alpha=0$ and $P_9$ when $\alpha =1$, respectively.}
		\label{fig: matrix A}
	\end{figure}
	The set $H=\{e_1,e_2,e_3,e_5,e_6,e_9\}$ is a hyperplane of $M[A]$ regardless of $\alpha$. As $M$ is $(2,2)$-uniform, it follows that $H\cup X$ is a hyperplane of $M$ of nullity $3$. Moreover, $M|H\cup X$ is $(1,2)$-uniform and $(M|H\cup X)^*$ is $(2,1)$-uniform by duality. Thus, by Lemma~\ref{lem: simple}, $(M|H\cup X)^*$ is a rank-$3$ simple matroid with $|X|+6$ elements. It follows that $|X|=1$.  By appropriate row operations, one then sees that $M$ may be represented by the $5\times 10$ binary matrix $B$ as given in Figure~\ref{fig: matrix B}. It remains to determine the coefficients $\beta_5,\ldots,\beta_9$.

	\begin{figure}[ht]
		\centering
		\begin{minipage}{\textwidth}
			\[
			\bordersquare{&e_1&e_2&e_3&e_4&e_5&e_6&e_7&e_8&e_9&x\cr
				&1&0&0&0&1&0&0&1&1&0\cr
				&0&1&0&0&1&1&0&0&\alpha&0\cr
				&0&0&1&0&0&1&1&0&1&0\cr
				&0&0&0&1&0&0&1&1&0&0\cr
				&0&0&0&0&\beta_5&\beta_6&\beta_7&\beta_8&\beta_9&1\cr
			}
			\]
		\end{minipage}
		\caption{Matrix $B$. $M[B]/x$ is isomorphic to $M(K_5\delete e)$ when $\alpha=0$ and $P_9$ when $\alpha =1$, respectively.}
		\label{fig: matrix B}
	\end{figure}
	\begin{figure}[ht]
			\centering
			\begin{minipage}{0.4\textwidth}
				\centering
				\[
				\bordersquare{&e_5&e_6&e_9&e_1&e_2&e_3&x\cr
					&1&0&0&1&1&0&\beta_5\cr
					&0&1&0&0&1&1&\beta_6\cr
					&0&0&1&1&\alpha&1&\beta_9\cr
				}
				\]
			\end{minipage}
		\hspace{1cm}
			\begin{minipage}{0.4\textwidth}
				\centering
				\[
				\bordersquare{&e_1&e_3&e_4&x&e_7&e_8\cr
					&1&0&0&0&0&1\cr
					&0&1&0&0&1&0\cr
					&0&0&1&0&1&1\cr
					&0&0&0&1&\beta_7&\beta_8\cr
				}
				\]
			\end{minipage}
			\caption{Matrices representing $(M|H\cup x)^*$ and $M|H'\cup x$.}
			\label{fig: submats}
	\end{figure}
	A representation for $(M|H\cup x)^*$ is given in Figure~\ref{fig: submats}. As this must be simple, we deduce that $\beta_5=\beta_6=1$ and $\beta_9=1-\alpha$. To determine $\beta_7$ and $\beta_8$, we consider the hyperplane $H'=\cl_M(\{e_1,e_3,e_4\})$ of $M[A]$. If $\alpha=0$, then the hyperplane $H'\cup x$ of $M$ contains $e_9$ and by an identical argument to before, $\beta_7=\beta_8=1$. We conclude that if $N\cong M(K_5\delete e)$, then $M\cong L_{10}$. Otherwise $N\cong P_9$, $\alpha=1$ and $H'=\{e_1,e_3,e_4,e_7,e_8\}$. Then $M|H'\cup x$ is represented by the rank-$4$ matrix of Figure~\ref{fig: submats}. As this matroid must be $(1,2)$-uniform, it follows that either $\beta_7=\beta_8=1$, in which case $M\cong L_{10}$, or precisely one of $\{\beta_7, \beta_8\}$ is zero, in which case, $M\cong P_{10}$.
\end{proof}

\begin{figure}[ht]
	\centering
	\begin{minipage}{\textwidth}
		\begin{minipage}{0.5\textwidth}
			\centering
			\begin{tikzpicture}[scale=1.1]
			\begin{scope}[every node/.style=element]
			\node (1000) at (90:1.8) {};
			\node (0100) at (200:2) {};
			\node (0010) at (0:0) {};
			\node (0001) at (-20:2) {};
			
			\node (1100) at ($(1000)!0.5!(0100)$) {};
			\node (1001) at ($(1000)!0.5!(0001)$) {};

			\node(1110) at (-90:1.1) {};
			\node (0110) at (intersection of 1100--1110 and 0100--0010) {};
			\node (0011) at (intersection of 1001--1110 and 0010--0001) {};
			
			\end{scope}
			\node (e1) at ($(1000)+(0:0.4)$) {$e_1$};
			\node (e2) at ($(0100)+(-90:0.4)$) {$e_2$};
			\node (e3) at ($(0010)+(50:0.4)$) {$e_3$};
			\node (e4) at ($(0001)+(-90:0.4)$) {$e_4$};
			
			\draw (0010) to (0110) to (0100) to (1100) to (1000) to (1001) to (0001) to (0011) to (0010);
			\draw (1100) to (0110) to (1110) to (0011) to (1001);
			
			\coordinate (T) at (90:2.5);
			\coordinate (B) at (-90:1.6);
			\coordinate (TR) at ($(T)+(-30:2.7)$);
			\coordinate (BR) at  ($(B)+(-30:2.7)$);
			\coordinate (BL) at ($(B)+(210:2.7)$);
			\coordinate (TL) at ($(T)+(210:2.7)$);
			
			\draw (BL) to (TL) to (T) to (TR) to (BR) to (B) to (BL);
			\draw (T) to (B);
			
			\node[scale=1.1] at (-90:2.8) {\large{$M(K_5\delete e)$}};
			
			\end{tikzpicture}
		\end{minipage}
		\begin{minipage}{0.5\textwidth}
			\centering
			\begin{tikzpicture}[scale=1.1]
			\begin{scope}[every node/.style=element]
			\node (1000) at (90:1.8) {};
			\node (0010) at (0,-0.5) {};
			\node (0100) at ($(0010)+(200:2)$) {};
			\node (0001) at ($(0010)+(-20:2)$) {};

			\node (1100) at ($(1000)!0.5!(0100)$) {};
			\node (1001) at ($(1000)!0.5!(0001)$) {};
			\node (0110) at ($(0100)!0.5!(0010)$) {};
			\node (0011) at ($(0010)!0.5!(0001)$) {};
			
			\coordinate(1110) at (-90:0.9) {};
			
			\node (1010) at (intersection of 1000--0110 and 0010--1100) {};
			
			\coordinate (C) at (intersection of 0100--1010 and 1000--0010) {};
			
			\coordinate (C') at ($(C)+(-90:0.1)$) {};
			
			\end{scope}
			\node (e1) at ($(1000)+(0:0.4)$) {$e_1$};
			\node (e2) at ($(0100)+(-90:0.4)$) {$e_2$};
			\node (e3) at ($(0010)+(-60:0.4)$) {$e_3$};
			\node (e4) at ($(0001)+(-90:0.4)$) {$e_4$};

			\draw (0010) to (0110) to (0100) to (1100) to (1000) to (1001) to (0001) to (0011) to (0010);
			\draw (1000) to (1010) to (0110);
			\draw (1100) to (1010) to (0010);
			\draw[dashed] (0100) to (1010) to (C);
			
		    \draw[dashed] plot [smooth,tension=1] coordinates{(1100) (C') (0110)};

			\draw[dashed] plot [smooth,tension=1] coordinates{(0011) (C') (1001)};

			\coordinate (T) at (90:2.5);
			\coordinate (B) at (-90:1.6);
			\coordinate (TR) at ($(T)+(-30:2.7)$);
			\coordinate (BR) at  ($(B)+(-30:2.7)$);
			\coordinate (BL) at ($(B)+(210:2.7)$);
			\coordinate (TL) at ($(T)+(210:2.7)$);
			
			\draw (BL) to (TL) to (T) to (TR) to (BR) to (B) to (BL);
			\draw (T) to (B);
			
			\node[scale=1.1] at (-90:2.8) {\Large{$P_9$}};
			
			\end{tikzpicture}
		\end{minipage}
	\end{minipage}
	\caption{Geometric representations of $M(K_5\delete e)$ and $P_9$.}
	\label{fig: P9 and MK5de}
\end{figure}
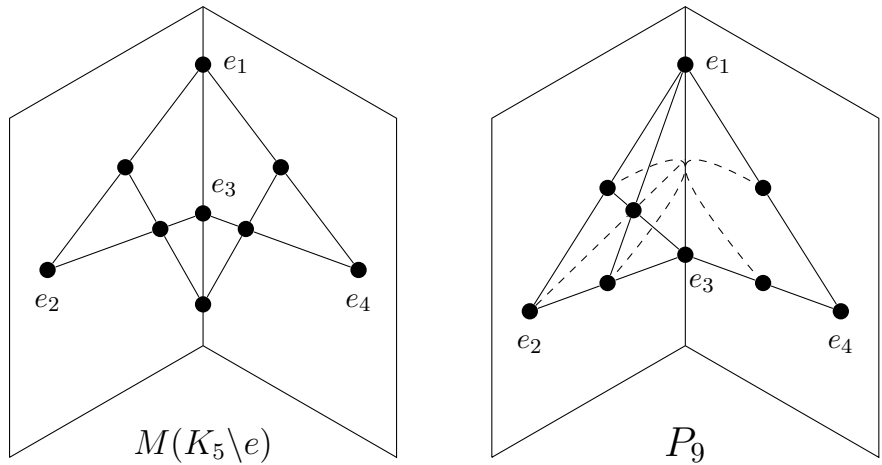

We now conclude the paper by proving Theorem~\ref{thm: 3-connected binary (1,2)}.
\begin{proof}[Proof of Theorem~\ref{thm: 3-connected binary (1,2)}]
	We first observe that a matroid is a binary $3$-connected $(2,2)$-uniform matroid if and only if its dual is also. In particular, both $AG(4,2)$ and $AG(4,2)^*$ are minor-maximal such matroids. To complete our list, let $M$ be a minor-maximal binary $3$-connected $(2,2)$-uniform matroid. If $r(M)\leq 4$, or $r^*(M)\leq 4$, then $M$ is a minor of either $AG(4,2)$ or $AG(4,2)^*$, a contradiction to maximality. Thus, $r(M),r^*(M)\geq 5$. Switching to the dual if necessary, we may then assume by Lemma~\ref{lem: r,r* bound} that $r(M)=5$.
	
	If $M$ has no $M(\cW_4)$ minor, then by Lemma~\ref{lem: no mW4 minor}, $M$ is isomorphic to one of $Z_r, Z_r^*, Z_r\delete t, Z_r \delete y$ for some $r\geq 3$ and, by Lemma~\ref{lemma: spikes}, $M\cong Z_5\delete t$. Otherwise, we may assume that $M$ does possess an $M(\cW_4)$-minor. Then, as $r(M)=~5$, $M$ is an extension of a single-element coextension $N$ of $M(\cW_4)$.  As $M(\cW_4)$ is self-dual, the matroid $N^*$ is a binary $(2,2)$-uniform single-element extension of $M(\cW_4)$. These are just the simple binary extensions of $M(\cW_4)$, namely $M(K_5\delete e)$, $P_9$ and $M^*(K_{3,3})$. Thus, $N\in \{M^*(K_5\delete e), P_9^*, M(K_{3,3})\}$. If $N\cong~ M(K_{3,3})$, then by Lemma~\ref{lem: MK33 exts}, $M$ must be affine and thus, by maximality, $M\cong AG(4,2)$. Otherwise, $N\in \{M^*(K_5\delete e), P_9^*\}$, in which case, by the dual of Lemma~\ref{lem: L10}, $M$ is isomorphic to either $P_{10}$ or $L_{10}^*$. But, as $L_{10}$ is affine, $L_{10}^*$ is a minor of $AG(4,2)^*$. We conclude by maximality that, in this case, $M\cong P_{10}$. The theorem then follows by duality.
\end{proof}

\section*{Acknowledgements}
The author thanks Professors Charles Semple and James Oxley for reading the early drafts of this paper and for their invaluable suggestions towards improving the exposition.

\end{document}